
\documentclass{article}
\usepackage{amssymb}
\usepackage{amsfonts}
\usepackage{amsmath}
\usepackage{amsmath}
\usepackage{amsfonts}
\usepackage{amssymb}
\usepackage{setspace}
\usepackage{enumitem}
\usepackage{chngcntr}
\usepackage{apptools}
\usepackage[bottom]{footmisc}
\usepackage[round]{natbib}   
\bibliographystyle{plainnat}
\usepackage{color}

\setcounter{MaxMatrixCols}{10}

\newtheorem{theorem}{Theorem}
\newtheorem{assumption}[theorem]{Assumption}

\newtheorem{axiom}[theorem]{Axiom}

\newtheorem{conjecture}[theorem]{Conjecture}
\newtheorem{corollary}[theorem]{Corollary}

\newtheorem{definition}[theorem]{Definition}
\newtheorem{example}[theorem]{Example}
\newtheorem{exercise}[theorem]{Exercise}
\newtheorem{lemma}[theorem]{Lemma}

\newtheorem{proposition}[theorem]{Proposition}
\newtheorem{remark}[theorem]{Remark}

\newenvironment{proof}[1][Proof]{\noindent \textbf{#1.} }{\  \rule{0.5em}{0.5em}}
\typeout{TCILATEX Macros for Scientific Word 2.5 <22 Dec 95>.}
\typeout{NOTICE:  This macro file is NOT proprietary and may be 
freely copied and distributed.}
\makeatletter
%
\newcount\@hour\newcount\@minute\chardef\@x10\chardef\@xv60
\def\tcitime{
\def\@time{%
  \@minute\time\@hour\@minute\divide\@hour\@xv
  \ifnum\@hour<\@x 0\fi\the\@hour:%
  \multiply\@hour\@xv\advance\@minute-\@hour
  \ifnum\@minute<\@x 0\fi\the\@minute
  }}%

\@ifundefined{hyperref}{}{}

\@ifundefined{qExtProgCall}{\def\qExtProgCall#1#2#3#4#5#6{\relax}}{}
%
%
%
%
\def\QCTOpt[#1]#2{%
  \def\QCTOptB{#1}
  \def\QCTOptA{#2}
}
\def\QCTNOpt#1{%
  \def\QCTOptA{#1}
  \let\QCTOptB\empty
}
\def\Qct{%
  \@ifnextchar[{%
    \QCTOpt}{\QCTNOpt}
}
\def\QCBOpt[#1]#2{%
  \def\QCBOptB{#1}
  \def\QCBOptA{#2}
}
\def\QCBNOpt#1{%
  \def\QCBOptA{#1}
  \let\QCBOptB\empty
}
\def\Qcb{%
  \@ifnextchar[{%
    \QCBOpt}{\QCBNOpt}
}
\def\PrepCapArgs{%
  \ifx\QCBOptA\empty
    \ifx\QCTOptA\empty
      {}%
    \else
      \ifx\QCTOptB\empty
        {\QCTOptA}%
      \else
        [\QCTOptB]{\QCTOptA}%
      \fi
    \fi
  \else
    \ifx\QCBOptA\empty
      {}%
    \else
      \ifx\QCBOptB\empty
        {\QCBOptA}%
      \else
        [\QCBOptB]{\QCBOptA}%
      \fi
    \fi
  \fi
}
\newcount\GRAPHICSTYPE
\GRAPHICSTYPE=\z@
\def\GRAPHICSPS#1{%
 \ifcase\GRAPHICSTYPE
   \special{ps: #1}%
 \or
   \special{language "PS", include "#1"}%
 \fi
}%
%
%
%
\def\graffile#1#2#3#4{%
    \leavevmode
    \raise -#4 \BOXTHEFRAME{%
        \hbox to #2{\raise #3\hbox to #2{\null #1\hfil}}}%
}%
%
\def\draftbox#1#2#3#4{%
 \leavevmode\raise -#4 \hbox{%
  \frame{\rlap{\protect\tiny #1}\hbox to #2%
   {\vrule height#3 width\z@ depth\z@\hfil}%
  }%
 }%
}%
\newcount\draft
\draft=\z@

\newif\ifwasdraft
\wasdraftfalse

\def\GRAPHIC#1#2#3#4#5{%
 \ifnum\draft=\@ne\draftbox{#2}{#3}{#4}{#5}%
  \else\graffile{#1}{#3}{#4}{#5}%
  \fi
 }%
\def\addtoLaTeXparams#1{%
    \edef\LaTeXparams{\LaTeXparams #1}}%
%

\newif\ifBoxFrame \BoxFramefalse
\newif\ifOverFrame \OverFramefalse
\newif\ifUnderFrame \UnderFramefalse

\def\BOXTHEFRAME#1{%
   \hbox{%
      \ifBoxFrame
         \frame{#1}%
      \else
         {#1}%
      \fi
   }%
}

\def\doFRAMEparams#1{\BoxFramefalse\OverFramefalse\UnderFramefalse\readFRAMEparams#1\end}%
\def\readFRAMEparams#1{%
 \ifx#1\end%
  \let\next=\relax
  \else
  \ifx#1i\dispkind=\z@\fi
  \ifx#1d\dispkind=\@ne\fi
  \ifx#1f\dispkind=\tw@\fi
  \ifx#1t\addtoLaTeXparams{t}\fi
  \ifx#1b\addtoLaTeXparams{b}\fi
  \ifx#1p\addtoLaTeXparams{p}\fi
  \ifx#1h\addtoLaTeXparams{h}\fi
  \ifx#1X\BoxFrametrue\fi
  \ifx#1O\OverFrametrue\fi
  \ifx#1U\UnderFrametrue\fi
  \ifx#1w
    \ifnum\draft=1\wasdrafttrue\else\wasdraftfalse\fi
    \draft=\@ne
  \fi
  \let\next=\readFRAMEparams
  \fi
 \next
 }%
%

\def\IFRAME#1#2#3#4#5#6{%
      \bgroup
      \let\QCTOptA\empty
      \let\QCTOptB\empty
      \let\QCBOptA\empty
      \let\QCBOptB\empty
      #6%
      \parindent=0pt%
      \leftskip=0pt
      \rightskip=0pt
      \setbox0 = \hbox{\QCBOptA}%
      \@tempdima = #1\relax
      \ifOverFrame
          \typeout{This is not implemented yet}%
          \show\HELP
      \else
         \ifdim\wd0>\@tempdima
            \advance\@tempdima by \@tempdima
            \ifdim\wd0 >\@tempdima
               \textwidth=\@tempdima
               \setbox1 =\vbox{%
                  \noindent\hbox to \@tempdima{\hfill\GRAPHIC{#5}{#4}{#1}{#2}{#3}\hfill}\\%
                  \noindent\hbox to \@tempdima{\parbox[b]{\@tempdima}{\QCBOptA}}%
               }%
               \wd1=\@tempdima
            \else
               \textwidth=\wd0
               \setbox1 =\vbox{%
                 \noindent\hbox to \wd0{\hfill\GRAPHIC{#5}{#4}{#1}{#2}{#3}\hfill}\\%
                 \noindent\hbox{\QCBOptA}%
               }%
               \wd1=\wd0
            \fi
         \else
            \ifdim\wd0>0pt
              \hsize=\@tempdima
              \setbox1 =\vbox{%
                \unskip\GRAPHIC{#5}{#4}{#1}{#2}{0pt}%
                \break
                \unskip\hbox to \@tempdima{\hfill \QCBOptA\hfill}%
              }%
              \wd1=\@tempdima
           \else
              \hsize=\@tempdima
              \setbox1 =\vbox{%
                \unskip\GRAPHIC{#5}{#4}{#1}{#2}{0pt}%
              }%
              \wd1=\@tempdima
           \fi
         \fi
         \@tempdimb=\ht1
         \advance\@tempdimb by \dp1
         \advance\@tempdimb by -#2%
         \advance\@tempdimb by #3%
         \leavevmode
         \raise -\@tempdimb \hbox{\box1}%
      \fi
      \egroup%
}%
%
\def\DFRAME#1#2#3#4#5{%
 \begin{center}
     \let\QCTOptA\empty
     \let\QCTOptB\empty
     \let\QCBOptA\empty
     \let\QCBOptB\empty
     \ifOverFrame 
        #5\QCTOptA\par
     \fi
     \GRAPHIC{#4}{#3}{#1}{#2}{\z@}
     \ifUnderFrame 
        \nobreak\par #5\QCBOptA
     \fi
 \end{center}%
 }%
%
\def\FFRAME#1#2#3#4#5#6#7{%
 \begin{figure}[#1]%
  \let\QCTOptA\empty
  \let\QCTOptB\empty
  \let\QCBOptA\empty
  \let\QCBOptB\empty
  \ifOverFrame
    #4
    \ifx\QCTOptA\empty
    \else
      \ifx\QCTOptB\empty
        \caption{\QCTOptA}%
      \else
        \caption[\QCTOptB]{\QCTOptA}%
      \fi
    \fi
    \ifUnderFrame\else
      \label{#5}%
    \fi
  \else
    \UnderFrametrue%
  \fi
  \begin{center}\GRAPHIC{#7}{#6}{#2}{#3}{\z@}\end{center}%
  \ifUnderFrame
    #4
    \ifx\QCBOptA\empty
      \caption{}%
    \else
      \ifx\QCBOptB\empty
        \caption{\QCBOptA}%
      \else
        \caption[\QCBOptB]{\QCBOptA}%
      \fi
    \fi
    \label{#5}%
  \fi
  \end{figure}%
 }%
%
%
%
%
%
\newcount\dispkind%

\def\makeactives{
  \catcode`\"=\active
  \catcode`\;=\active
  \catcode`\:=\active
  \catcode`\'=\active
  \catcode`\~=\active
}
\bgroup
   \makeactives
   \gdef\activesoff{%
      \def"{\string"}
      \def;{\string;}
      \def:{\string:}
      \def'{\string'}
      \def~{\string~}
    }
\egroup

\def\FRAME#1#2#3#4#5#6#7#8{%
 \bgroup
 \@ifundefined{bbl@deactivate}{}{\activesoff}
 \ifnum\draft=\@ne
   \wasdrafttrue
 \else
   \wasdraftfalse%
 \fi
 \def\LaTeXparams{}%
 \dispkind=\z@
 \def\LaTeXparams{}%
 \doFRAMEparams{#1}%
 \ifnum\dispkind=\z@\IFRAME{#2}{#3}{#4}{#7}{#8}{#5}\else
  \ifnum\dispkind=\@ne\DFRAME{#2}{#3}{#7}{#8}{#5}\else
   \ifnum\dispkind=\tw@
    \edef\@tempa{\noexpand\FFRAME{\LaTeXparams}}%
    \@tempa{#2}{#3}{#5}{#6}{#7}{#8}%
    \fi
   \fi
  \fi
  \ifwasdraft\draft=1\else\draft=0\fi{}%
  \egroup
 }%
%

\def\TEXUX#1{"texux"}

%
%
%
%
%
%
%
%
%

%
\long\def\QQQ#1#2{%
     \long\expandafter\def\csname#1\endcsname{#2}}%
\@ifundefined{QTP}{\def\QTP#1{}}{}
\@ifundefined{QEXCLUDE}{\def\QEXCLUDE#1{}}{}
\@ifundefined{Qlb}{}{}
\@ifundefined{Qlt}{}{}
\long\def\QQA#1#2{}%
\def\QTR#1#2{{\csname#1\endcsname #2}}
\def\EXPAND#1[#2]#3{}%
\def\NOEXPAND#1[#2]#3{}%
\def\LaTeXparent#1{}%
\def\ChildStyles#1{}%
\def\ChildDefaults#1{}%
\def\QTagDef#1#2#3{}%
%
\@ifundefined{StyleEditBeginDoc}{}{}
%
\def\QQfnmark#1{\footnotemark}

%
\def\makeatletter\input gnuindex.sty\makeatother\makeindex{\makeatletter\input gnuindex.sty\makeatother\makeindex}%
\@ifundefined{INDEX}{\def\INDEX#1#2{}{}}{}%
\@ifundefined{SUBINDEX}{\def\SUBINDEX#1#2#3{}{}{}}{}%
\@ifundefined{initial}%
   {\def\initial#1{\bigbreak{\raggedright\large\bf #1}\kern 2\p@\penalty3000}}%
   {}%
\@ifundefined{entry}{}{}%
\@ifundefined{primary}{}{}%
\@ifundefined{secondary}{}{}%
\@ifundefined{ZZZ}{}{\makeatletter\input gnuindex.sty\makeatother\makeindex\makeatletter}%
%
\@ifundefined{abstract}{%
 \def\abstract{%
  \if@twocolumn
   \section*{Abstract (Not appropriate in this style!)}%
   \else \small 
   \begin{center}{\bf Abstract\vspace{-.5em}\vspace{\z@}}\end{center}%
   \quotation 
   \fi
  }%
 }{%
 }%
\@ifundefined{endabstract}{\def\endabstract
  {\if@twocolumn\else\endquotation\fi}}{}%
\@ifundefined{maketitle}{\def\maketitle#1{}}{}%
\@ifundefined{affiliation}{\def\affiliation#1{}}{}%
\@ifundefined{proof}{}{}%
\@ifundefined{endproof}{}{}%
\@ifundefined{newfield}{\def\newfield#1#2{}}{}%
\@ifundefined{chapter}{\def\chapter#1{\par(Chapter head:)#1\par }%
 \newcount\c@chapter}{}%
\@ifundefined{part}{\def\part#1{\par(Part head:)#1\par }}{}%
\@ifundefined{section}{\def\section#1{\par(Section head:)#1\par }}{}%
\@ifundefined{subsection}{\def\subsection#1%
 {\par(Subsection head:)#1\par }}{}%
\@ifundefined{subsubsection}{\def\subsubsection#1%
 {\par(Subsubsection head:)#1\par }}{}%
\@ifundefined{paragraph}{\def\paragraph#1%
 {\par(Subsubsubsection head:)#1\par }}{}%
\@ifundefined{subparagraph}{\def\subparagraph#1%
 {\par(Subsubsubsubsection head:)#1\par }}{}%
\@ifundefined{therefore}{}{}%
\@ifundefined{backepsilon}{}{}%
\@ifundefined{yen}{}{}%
\@ifundefined{registered}{%
   \def\registered{\relax\ifmmode{}\r@gistered
                    \else$\m@th\r@gistered$\fi}%
 \def\r@gistered{^{\ooalign
  {\hfil\raise.07ex\hbox{$\scriptstyle\rm\text{R}$}\hfil\crcr
  \mathhexbox20D}}}}{}%
\@ifundefined{Eth}{}{}%
\@ifundefined{eth}{}{}%
\@ifundefined{Thorn}{}{}%
\@ifundefined{thorn}{}{}%
%
\@ifundefined{degree}{}{}%
%
\newdimen\theight
\def\Column{%
 \vadjust{\setbox\z@=\hbox{\scriptsize\quad\quad tcol}%
  \theight=\ht\z@\advance\theight by \dp\z@\advance\theight by \lineskip
  \kern -\theight \vbox to \theight{%
   \rightline{\rlap{\box\z@}}%
   \vss
   }%
  }%
 }%
\def\qed{%
 \ifhmode\unskip\nobreak\fi\ifmmode\ifinner\else\hskip5\p@\fi\fi
 \hbox{\hskip5\p@\vrule width4\p@ height6\p@ depth1.5\p@\hskip\p@}%
 }%
\def\miss{\hbox{\vrule height2\p@ width 2\p@ depth\z@}}%
%
%
\def\tcol#1{{\baselineskip=6\p@ \vcenter{#1}} \Column}  %
%
%
%
%
%

\def\newfmtname{LaTeX2e}
\def\chkcompat{%
   \if@compatibility
   \else
     \usepackage{latexsym}
   \fi
}

\ifx\fmtname\newfmtname
  \DeclareOldFontCommand{\rm}{\normalfont\rmfamily}{\mathrm}
  \DeclareOldFontCommand{\sf}{\normalfont\sffamily}{\mathsf}
  \DeclareOldFontCommand{\tt}{\normalfont\ttfamily}{\mathtt}
  \DeclareOldFontCommand{\bf}{\normalfont\bfseries}{\mathbf}
  \DeclareOldFontCommand{\it}{\normalfont\itshape}{\mathit}
  \DeclareOldFontCommand{\sl}{\normalfont\slshape}{\@nomath\sl}
  \DeclareOldFontCommand{\sc}{\normalfont\scshape}{\@nomath\sc}
  \chkcompat
\fi

%

\def\alpha{\Greekmath 010B }%
\def\beta{\Greekmath 010C }%
\def\gamma{\Greekmath 010D }%
\def\delta{\Greekmath 010E }%
\def\epsilon{\Greekmath 010F }%
\def\zeta{\Greekmath 0110 }%
\def\eta{\Greekmath 0111 }%
\def\theta{\Greekmath 0112 }%
\def\iota{\Greekmath 0113 }%
\def\kappa{\Greekmath 0114 }%
\def\lambda{\Greekmath 0115 }%
\def\mu{\Greekmath 0116 }%
\def\nu{\Greekmath 0117 }%
\def\xi{\Greekmath 0118 }%
\def\pi{\Greekmath 0119 }%
\def\rho{\Greekmath 011A }%
\def\sigma{\Greekmath 011B }%
\def\tau{\Greekmath 011C }%
\def\upsilon{\Greekmath 011D }%
\def\phi{\Greekmath 011E }%
\def\chi{\Greekmath 011F }%
\def\psi{\Greekmath 0120 }%
\def\omega{\Greekmath 0121 }%
\def\varepsilon{\Greekmath 0122 }%
\def\vartheta{\Greekmath 0123 }%
\def\varpi{\Greekmath 0124 }%
\def\varrho{\Greekmath 0125 }%
\def\varsigma{\Greekmath 0126 }%
\def\varphi{\Greekmath 0127 }%

\def\nabla{\Greekmath 0272 }
\def\FindBoldGroup{%
   {\setbox0=\hbox{$\mathbf{x\global\edef\theboldgroup{\the\mathgroup}}$}}%
}

\def\Greekmath#1#2#3#4{%
    \if@compatibility
        \ifnum\mathgroup=\symbold
           \mathchoice{\mbox{\boldmath$\displaystyle\mathchar"#1#2#3#4$}}%
                      {\mbox{\boldmath$\textstyle\mathchar"#1#2#3#4$}}%
                      {\mbox{\boldmath$\scriptstyle\mathchar"#1#2#3#4$}}%
                      {\mbox{\boldmath$\scriptscriptstyle\mathchar"#1#2#3#4$}}%
        \else
           \mathchar"#1#2#3#4%
        \fi 
    \else 
        \FindBoldGroup
        \ifnum\mathgroup=\theboldgroup 
           \mathchoice{\mbox{\boldmath$\displaystyle\mathchar"#1#2#3#4$}}%
                      {\mbox{\boldmath$\textstyle\mathchar"#1#2#3#4$}}%
                      {\mbox{\boldmath$\scriptstyle\mathchar"#1#2#3#4$}}%
                      {\mbox{\boldmath$\scriptscriptstyle\mathchar"#1#2#3#4$}}%
        \else
           \mathchar"#1#2#3#4%
        \fi     	    
	  \fi}

\newif\ifGreekBold  \GreekBoldfalse
\let\SAVEPBF=\pbf
\def\pbf{\GreekBoldtrue\SAVEPBF}%

\@ifundefined{theorem}{\newtheorem{theorem}{Theorem}}{}
\@ifundefined{lemma}{\newtheorem{lemma}[theorem]{Lemma}}{}
\@ifundefined{corollary}{}{}
\@ifundefined{conjecture}{}{}
\@ifundefined{proposition}{}{}
\@ifundefined{axiom}{}{}
\@ifundefined{remark}{\newtheorem{remark}{Remark}}{}
\@ifundefined{example}{}{}
\@ifundefined{exercise}{}{}
\@ifundefined{definition}{}{}

\@ifundefined{mathletters}{%
  \newcounter{equationnumber}  
  \def\mathletters{%
     \addtocounter{equation}{1}
     \edef\@currentlabel{\theequation}%
     \setcounter{equationnumber}{\c@equation}
     \setcounter{equation}{0}%
     \edef\theequation{\@currentlabel\noexpand\alph{equation}}%
  }
  
}{}

\@ifundefined{BibTeX}{%
    \def\BibTeX{{\rm B\kern-.05em{\sc i\kern-.025em b}\kern-.08em
                 T\kern-.1667em\lower.7ex\hbox{E}\kern-.125emX}}}{}%
\@ifundefined{AmS}%
    {\def\AmS{{\protect\usefont{OMS}{cmsy}{m}{n}%
                A\kern-.1667em\lower.5ex\hbox{M}\kern-.125emS}}}{}%
\@ifundefined{AmSTeX}{}{}%
%

%
%
\ifx\ds@amstex\relax
   \message{amstex already loaded}\makeatother 
\else
   \@ifpackageloaded{amstex}%
      {\message{amstex already loaded}\makeatother }
      {}
   \@ifpackageloaded{amsgen}%
      {\message{amsgen already loaded}\makeatother }
      {}
\fi
%
%
%
%
\let\DOTSI\relax
\def\RIfM@{\relax\ifmmode}%
\def\FN@{\futurelet\next}%
\newcount\intno@
\def\iint{\DOTSI\intno@\tw@\FN@\ints@}%
\def\iiint{\DOTSI\intno@\thr@@\FN@\ints@}%
\def\iiiint{\DOTSI\intno@4 \FN@\ints@}%
\def\idotsint{\DOTSI\intno@\z@\FN@\ints@}%
\def\ints@{\findlimits@\ints@@}%
\newif\iflimtoken@
\newif\iflimits@
\def\findlimits@{\limtoken@true\ifx\next\limits\limits@true
 \else\ifx\next\nolimits\limits@false\else
 \limtoken@false\ifx\ilimits@\nolimits\limits@false\else
 \ifinner\limits@false\else\limits@true\fi\fi\fi\fi}%
\def\multint@{\int\ifnum\intno@=\z@\intdots@                          
 \else\intkern@\fi                                                    
 \ifnum\intno@>\tw@\int\intkern@\fi                                   
 \ifnum\intno@>\thr@@\int\intkern@\fi                                 
 \int}
\def\multintlimits@{\intop\ifnum\intno@=\z@\intdots@\else\intkern@\fi
 \ifnum\intno@>\tw@\intop\intkern@\fi
 \ifnum\intno@>\thr@@\intop\intkern@\fi\intop}%
\def\intic@{%
    \mathchoice{\hskip.5em}{\hskip.4em}{\hskip.4em}{\hskip.4em}}%
\def\negintic@{\mathchoice
 {\hskip-.5em}{\hskip-.4em}{\hskip-.4em}{\hskip-.4em}}%
\def\ints@@{\iflimtoken@                                              
 \def\ints@@@{\iflimits@\negintic@
   \mathop{\intic@\multintlimits@}\limits                             
  \else\multint@\nolimits\fi                                          
  \eat@}
 \else                                                                
 \def\ints@@@{\iflimits@\negintic@
  \mathop{\intic@\multintlimits@}\limits\else
  \multint@\nolimits\fi}\fi\ints@@@}%
\def\intkern@{\mathchoice{\!\!\!}{\!\!}{\!\!}{\!\!}}%
\def\plaincdots@{\mathinner{\cdotp\cdotp\cdotp}}%
\def\intdots@{\mathchoice{\plaincdots@}%
 {{\cdotp}\mkern1.5mu{\cdotp}\mkern1.5mu{\cdotp}}%
 {{\cdotp}\mkern1mu{\cdotp}\mkern1mu{\cdotp}}%
 {{\cdotp}\mkern1mu{\cdotp}\mkern1mu{\cdotp}}}%
%
%
%
\def\RIfM@{\relax\protect\ifmmode}
\def\text{\RIfM@\expandafter\text@\else\expandafter\mbox\fi}
\let\nfss@text\text
\def\text@#1{\mathchoice
   {\textdef@\displaystyle\f@size{#1}}%
   {\textdef@\textstyle\tf@size{\firstchoice@false #1}}%
   {\textdef@\textstyle\sf@size{\firstchoice@false #1}}%
   {\textdef@\textstyle \ssf@size{\firstchoice@false #1}}%
   \glb@settings}

\def\textdef@#1#2#3{\hbox{{%
                    \everymath{#1}%
                    \let\f@size#2\selectfont
                    #3}}}
\newif\iffirstchoice@
\firstchoice@true
%
%
%
%
%
\def\Let@{\relax\iffalse{\fi\let\\=\cr\iffalse}\fi}%
\def\vspace@{\def\vspace##1{\crcr\noalign{\vskip##1\relax}}}%
\def\multilimits@{\bgroup\vspace@\Let@
 \baselineskip\fontdimen10 \scriptfont\tw@
 \advance\baselineskip\fontdimen12 \scriptfont\tw@
 \lineskip\thr@@\fontdimen8 \scriptfont\thr@@
 \lineskiplimit\lineskip
 \vbox\bgroup\ialign\bgroup\hfil$\m@th\scriptstyle{##}$\hfil\crcr}%
\def\Sb{_\multilimits@}%
\def\endSb{\crcr\egroup\egroup\egroup}%
\def\Sp{^\multilimits@}%

%
%
%
\newdimen\ex@
\ex@.2326ex
\def\rightarrowfill@#1{$#1\m@th\mathord-\mkern-6mu\cleaders
 \hbox{$#1\mkern-2mu\mathord-\mkern-2mu$}\hfill
 \mkern-6mu\mathord\rightarrow$}%
\def\leftarrowfill@#1{$#1\m@th\mathord\leftarrow\mkern-6mu\cleaders
 \hbox{$#1\mkern-2mu\mathord-\mkern-2mu$}\hfill\mkern-6mu\mathord-$}%
\def\leftrightarrowfill@#1{$#1\m@th\mathord\leftarrow
\mkern-6mu\cleaders
 \hbox{$#1\mkern-2mu\mathord-\mkern-2mu$}\hfill
 \mkern-6mu\mathord\rightarrow$}%
\def\overrightarrow{\mathpalette\overrightarrow@}%
\def\overrightarrow@#1#2{\vbox{\ialign{##\crcr\rightarrowfill@#1\crcr
 \noalign{\kern-\ex@\nointerlineskip}$\m@th\hfil#1#2\hfil$\crcr}}}%

\def\overleftarrow{\mathpalette\overleftarrow@}%
\def\overleftarrow@#1#2{\vbox{\ialign{##\crcr\leftarrowfill@#1\crcr
 \noalign{\kern-\ex@\nointerlineskip}$\m@th\hfil#1#2\hfil$\crcr}}}%
\def\overleftrightarrow{\mathpalette\overleftrightarrow@}%
\def\overleftrightarrow@#1#2{\vbox{\ialign{##\crcr
   \leftrightarrowfill@#1\crcr
 \noalign{\kern-\ex@\nointerlineskip}$\m@th\hfil#1#2\hfil$\crcr}}}%
\def\underrightarrow{\mathpalette\underrightarrow@}%
\def\underrightarrow@#1#2{\vtop{\ialign{##\crcr$\m@th\hfil#1#2\hfil
  $\crcr\noalign{\nointerlineskip}\rightarrowfill@#1\crcr}}}%

\def\underleftarrow{\mathpalette\underleftarrow@}%
\def\underleftarrow@#1#2{\vtop{\ialign{##\crcr$\m@th\hfil#1#2\hfil
  $\crcr\noalign{\nointerlineskip}\leftarrowfill@#1\crcr}}}%
\def\underleftrightarrow{\mathpalette\underleftrightarrow@}%
\def\underleftrightarrow@#1#2{\vtop{\ialign{##\crcr$\m@th
  \hfil#1#2\hfil$\crcr
 \noalign{\nointerlineskip}\leftrightarrowfill@#1\crcr}}}%


\def\qopnamewl@#1{\mathop{\operator@font#1}\nlimits@}
\let\nlimits@\displaylimits
\def\setboxz@h{\setbox\z@\hbox}

\def\varlim@#1#2{\mathop{\vtop{\ialign{##\crcr
 \hfil$#1\m@th\operator@font lim$\hfil\crcr
 \noalign{\nointerlineskip}#2#1\crcr
 \noalign{\nointerlineskip\kern-\ex@}\crcr}}}}

 \def\rightarrowfill@#1{\m@th\setboxz@h{$#1-$}\ht\z@\z@
  $#1\copy\z@\mkern-6mu\cleaders
  \hbox{$#1\mkern-2mu\box\z@\mkern-2mu$}\hfill
  \mkern-6mu\mathord\rightarrow$}
\def\leftarrowfill@#1{\m@th\setboxz@h{$#1-$}\ht\z@\z@
  $#1\mathord\leftarrow\mkern-6mu\cleaders
  \hbox{$#1\mkern-2mu\copy\z@\mkern-2mu$}\hfill
  \mkern-6mu\box\z@$}

\def\projlim{\qopnamewl@{proj\,lim}}
\def\injlim{\qopnamewl@{inj\,lim}}
\def\varinjlim{\mathpalette\varlim@\rightarrowfill@}
\def\varprojlim{\mathpalette\varlim@\leftarrowfill@}
\def\varliminf{\mathpalette\varliminf@{}}
\def\varliminf@#1{\mathop{\underline{\vrule\@depth.2\ex@\@width\z@
   \hbox{$#1\m@th\operator@font lim$}}}}
\def\varlimsup{\mathpalette\varlimsup@{}}
\def\varlimsup@#1{\mathop{\overline
  {\hbox{$#1\m@th\operator@font lim$}}}}

%
%
%
%
%
%
%
%
%
%
%
%
%
%
%
%
%
%
%
%
%
%
%

%
%
%
%
%
%
%
%
%
%
%
%
%
%
%
%
%
%
%
%
%
%

%
%
%
%
%
%
%
%
%
%
%
%
%
%
%
%
%
%
%
%
%
%
%
%
\begingroup \catcode `|=0 \catcode `[= 1
\catcode`]=2 \catcode `\{=12 \catcode `\}=12
\catcode`\\=12 
|gdef|@alignverbatim#1\end{align}[#1|end[align]]
|gdef|@salignverbatim#1\end{align*}[#1|end[align*]]

|gdef|@alignatverbatim#1\end{alignat}[#1|end[alignat]]
|gdef|@salignatverbatim#1\end{alignat*}[#1|end[alignat*]]

|gdef|@xalignatverbatim#1\end{xalignat}[#1|end[xalignat]]
|gdef|@sxalignatverbatim#1\end{xalignat*}[#1|end[xalignat*]]

|gdef|@gatherverbatim#1\end{gather}[#1|end[gather]]
|gdef|@sgatherverbatim#1\end{gather*}[#1|end[gather*]]

|gdef|@gatherverbatim#1\end{gather}[#1|end[gather]]
|gdef|@sgatherverbatim#1\end{gather*}[#1|end[gather*]]

|gdef|@multilineverbatim#1\end{multiline}[#1|end[multiline]]
|gdef|@smultilineverbatim#1\end{multiline*}[#1|end[multiline*]]

|gdef|@arraxverbatim#1\end{arrax}[#1|end[arrax]]
|gdef|@sarraxverbatim#1\end{arrax*}[#1|end[arrax*]]

|gdef|@tabulaxverbatim#1\end{tabulax}[#1|end[tabulax]]
|gdef|@stabulaxverbatim#1\end{tabulax*}[#1|end[tabulax*]]

|endgroup

\def\align{\@verbatim \frenchspacing\@vobeyspaces \@alignverbatim
You are using the "align" environment in a style in which it is not defined.}

\@namedef{align*}{\@verbatim\@salignverbatim
You are using the "align*" environment in a style in which it is not defined.}
\expandafter\let\csname endalign*\endcsname =\endtrivlist

\def\alignat{\@verbatim \frenchspacing\@vobeyspaces \@alignatverbatim
You are using the "alignat" environment in a style in which it is not defined.}

\@namedef{alignat*}{\@verbatim\@salignatverbatim
You are using the "alignat*" environment in a style in which it is not defined.}
\expandafter\let\csname endalignat*\endcsname =\endtrivlist

\def\xalignat{\@verbatim \frenchspacing\@vobeyspaces \@xalignatverbatim
You are using the "xalignat" environment in a style in which it is not defined.}

\@namedef{xalignat*}{\@verbatim\@sxalignatverbatim
You are using the "xalignat*" environment in a style in which it is not defined.}
\expandafter\let\csname endxalignat*\endcsname =\endtrivlist

\def\gather{\@verbatim \frenchspacing\@vobeyspaces \@gatherverbatim
You are using the "gather" environment in a style in which it is not defined.}

\@namedef{gather*}{\@verbatim\@sgatherverbatim
You are using the "gather*" environment in a style in which it is not defined.}
\expandafter\let\csname endgather*\endcsname =\endtrivlist

\def\multiline{\@verbatim \frenchspacing\@vobeyspaces \@multilineverbatim
You are using the "multiline" environment in a style in which it is not defined.}

\@namedef{multiline*}{\@verbatim\@smultilineverbatim
You are using the "multiline*" environment in a style in which it is not defined.}
\expandafter\let\csname endmultiline*\endcsname =\endtrivlist

\def\arrax{\@verbatim \frenchspacing\@vobeyspaces \@arraxverbatim
You are using a type of "array" construct that is only allowed in AmS-LaTeX.}

\def\tabulax{\@verbatim \frenchspacing\@vobeyspaces \@tabulaxverbatim
You are using a type of "tabular" construct that is only allowed in AmS-LaTeX.}

\@namedef{arrax*}{\@verbatim\@sarraxverbatim
You are using a type of "array*" construct that is only allowed in AmS-LaTeX.}
\expandafter\let\csname endarrax*\endcsname =\endtrivlist

\@namedef{tabulax*}{\@verbatim\@stabulaxverbatim
You are using a type of "tabular*" construct that is only allowed in AmS-LaTeX.}
\expandafter\let\csname endtabulax*\endcsname =\endtrivlist


\def\@@eqncr{\let\@tempa\relax
    \ifcase\@eqcnt \def\@tempa{& & &}\or \def\@tempa{& &}%
      \else \def\@tempa{&}\fi
     \@tempa
     \if@eqnsw
        \iftag@
           \@taggnum
        \else
           \@eqnnum\stepcounter{equation}%
        \fi
     \fi
     \global\tag@false
     \global\@eqnswtrue
     \global\@eqcnt\z@\cr}

 \def\endequation{%
     \ifmmode\ifinner 
      \iftag@
        \addtocounter{equation}{-1} 
        $\hfil
           \displaywidth\linewidth\@taggnum\egroup \endtrivlist
        \global\tag@false
        \global\@ignoretrue   
      \else
        $\hfil
           \displaywidth\linewidth\@eqnnum\egroup \endtrivlist
        \global\tag@false
        \global\@ignoretrue 
      \fi
     \else   
      \iftag@
        \addtocounter{equation}{-1} 
        \eqno \hbox{\@taggnum}
        \global\tag@false%
        $$\global\@ignoretrue
      \else
        \eqno \hbox{\@eqnnum}
        $$\global\@ignoretrue
      \fi
     \fi\fi
 } 

 \newif\iftag@ \tag@false
 
 \def\tag{\@ifnextchar*{\@tagstar}{\@tag}}
 \def\@tag#1{%
     \global\tag@true
     \global\def\@taggnum{(#1)}}
 \def\@tagstar*#1{%
     \global\tag@true
     \global\def\@taggnum{#1}%
}


\makeatother

\title{\vspace{-4cm}Exploration versus exploitation in reinforcement learning: a stochastic control approach\thanks{We are grateful for comments from the seminar participants at  UC Berkeley and Stanford, and from the participants at the Columbia Engineering for Humanity Research Forum
``Business Analytics; Financial Services and Technologies" in New York and
The Quantitative Methods in Finance 2018 Conference in Sydney. We thank Jose Blanchet, Wendell Fleming, Kay Giesecke, Xin Guo, Josef Teichmann
and Renyuan Xu for helpful discussions and comments on the paper.}}
\author{Haoran Wang\footnote{Department of Industrial Engineering and Operations Research, Columbia University, New York, NY 10027, USA. Email: hw2718@columbia.edu.}\and Thaleia Zariphopoulou\footnote{Department of Mathematics and IROM, The University of Texas at Austin, Austin, USA and the Oxford-Man Institute, University of Oxford, Oxford, UK. Email: zariphop@math.utexas.edu.}\and Xun Yu Zhou\footnote{Department of Industrial Engineering and Operations Research, and The Data Science Institute, Columbia University, New York, NY 10027, USA. Email: xz2574@columbia.edu.}}
\date{\vspace{-5ex}}

\begin{document}
\maketitle
\begin{center}
First draft: March 2018\\
This draft: February 2019
\end{center}

\bigskip
\bigskip

\begin{abstract}
We consider reinforcement learning (RL) in continuous time and study the problem of
achieving the best trade-off between exploration of a black box environment and exploitation of current knowledge. We propose an entropy-regularized  reward function
involving the differential entropy of the distributions of actions, and motivate and devise an exploratory formulation for
the feature dynamics that captures repetitive learning under exploration.
The resulting optimization problem is a revitalization  of the classical relaxed stochastic control. We carry out a complete analysis of the problem in the linear--quadratic (LQ) setting and deduce that the optimal feedback control distribution
for balancing exploitation and exploration  is Gaussian. This in turn interprets and justifies the widely adopted Gaussian exploration in RL, beyond its simplicity for sampling. Moreover, the exploitation and exploration are captured, respectively and mutual-exclusively, by the mean and variance of the Gaussian distribution.
We also find that a more random environment contains more learning opportunities in the sense that less exploration is needed.
We characterize the cost of exploration, which, for the LQ case, is shown to be proportional to the entropy regularization weight and inversely proportional to the discount rate. Finally, as the weight of exploration decays to zero, we prove the convergence of the solution of the entropy-regularized  LQ problem to the one of the classical LQ problem.

 \medskip
 {\bf Key words.} Reinforcement learning, exploration, exploitation,
 entropy regularization, stochastic control, relaxed control, linear--quadratic, Gaussian distribution.
\end{abstract}
\newpage

\newpage

\section{Introduction}
Reinforcement learning (RL) is currently  one of the most active and fast developing subareas in machine learning. In recent years, it has been successfully applied to solve large scale real world, complex decision making problems, including playing perfect-information board games such as Go (AlphaGo/AlphaGo Zero, \cite{Go1}, \cite{Go2}), achieving human-level performance in video games (\cite{DQN}), and driving autonomously (\cite{robot1}, \cite{robot2}). An RL agent does not pre-specify a structural model or a family of models but, instead, gradually learns the best (or near-best) strategies  based on trial and error, through interactions with the random (black box) environment and incorporation of the responses of these interactions, in order to improve the overall performance. This is a case of ``kill two birds with one stone": \textit{the agent's actions (controls) serve both as a means to explore (learn) and a way to exploit (optimize)}.

Since exploration is inherently costly in terms of resource, time and opportunity, a natural and crucial question in RL is to address the dichotomy between exploration of uncharted territory and exploitation of
existing knowledge. Such question exists in both the stateless RL settings (e.g. the multi-armed bandit problem) and the more general multi-state RL settings (e.g. \cite{SB}, \cite{Ka}). Specifically,  the agent must balance between greedily exploiting what has been learned so far to choose actions that yield near-term higher rewards, and continuously exploring the environment to acquire more information to potentially achieve long-term benefits. Extensive studies have been conducted to find strategies for the best trade-off betweeen exploitation and exploration.\footnote{For the multi-armed bandit problem, well known strategies include Gittins index approach (\cite{Gittins}), Thompson sampling (\cite{Thompson}), and upper confidence bound algorithm (\cite{UCB}), whereas theoretical optimality is established, for example, in \cite{RV2,RV1}. For general RL problems, various efficient exploration methods have been proposed that have been proved to induce low sample complexity, among other advantages (see, for example, \cite{RMax}, \cite{MBIE}, \cite{DelayedQ}).}

However, most of the contributions to balancing exploitation and exploration do not include exploration explicitly  as a part of the optimization objective;  the attention has mainly focused on solving the classical optimization problem maximizing the accumulated rewards, while exploration is typically treated separately as an {\it ad-hoc} chosen exogenous component, rather than being endogenously {\it derived} as a part of the solution to the overall RL problem. The recently proposed discrete time entropy-regularized  (also termed as
``entropy-augmented" or ``softmax")  RL formulation, on the other hand, explicitly incorporates  exploration into the optimization objective as a regularization term, with a trade-off weight imposed on the entropy of the exploration strategy (\cite{Z}, \cite{Na}, \cite{Glearning}). An exploratory distribution with a greater entropy signifies  a higher level of exploration, reflecting a bigger weight on the exploration front. On the other hand, having the minimal entropy, the extreme case of Dirac measure implies no exploration, reducing to  the  case of
 classical optimization with a complete knowledge about the underlying model.
 Recent works have been devoted to the designing of various algorithms to solve the entropy regularized RL problem, where numerical experiments have demonstrated remarkable robustness and multi-modal policy learning (\cite{Ha}, \cite{Ha2}).

 In this paper, we study the trade-off between exploration and exploitation for RL
 in a continuous-time setting with both continuous control (action) and
state (feature) spaces.\footnote{The terms ``feature" and ``action" are
typically used in the RL literature, whose counterparts in the control literature
are ``state" and ``control", respectively. Since this paper uses the control approach to study RL problems, we will interchangeably use these terms
whenever there is no confusion.}
 Such a continuous-time formulation is especially appealing if the agent can interact with the environment at ultra-high frequency, examples including
 high frequency stock trading, autonomous driving and snowboard riding.
  More importantly, once cast in continuous time, it is possible, thanks in no small measure to the tools of stochastic calculus and differential equations, to derive  elegant and insightful results which, in turn, lead to theoretical  understanding of some of the fundamental issues in RL, give guidance to algorithm design and provide {\it interpretability} to the underlying learning technologies.

Our first main contribution is to  propose an {\it entropy-regularized  reward function} involving the differential entropy for exploratory probability distributions over the continuous action space, and motivate and devise an ``exploratory formulation"  for the state dynamics that captures  repetitive learning under exploration in the continuous time limit. Existing theoretical works on exploration mainly concentrate on the analysis at the algorithmic level, including proving convergence of the proposed exploration algorithms to the solutions of the classical optimization problems  (see, for example, \cite{greedy}, \cite{convergence}). However, they rarely look into the  impact of the exploration on
changing significantly the underlying dynamics (e.g. the transition probabilities
in the discrete time context).
Indeed, exploration not only substantially enriches the space of control strategies (from that of Dirac measures to that of all possible probability distributions) but also, as a result, enormously expands the reachable space of states.
This, in turn, sets out to change both the underlying state transitions and the system dynamics.

We show that our exploratory  formulation can account for the effects of learning in both the rewards received and the state transitions observed from the interactions with the environment. It, thus, unearths the important characteristics of learning at a more refined and in-depth level, beyond merely devising and analyzing  learning algorithms. Intriguingly, the proposed formulation of the state dynamics coincides with that in
the {\it relaxed control} framework in classical control theory (see, for example, \cite{Nisio, Karouni, Zhou, Kurtz1, Kurtz2}), which was motivated by entirely different reasons. Specifically, relaxed controls were introduced to mainly deal with the {\it theoretical} question of whether an optimal control exists. The approach essentially entails randomization to convexify the universe of control strategies.
To the best of our knowledge, the present paper is the first to bring back the formulation of
relaxed control, guided by a practical motivation: exploration and learning.

We then carry out a complete analysis on the continuous-time entropy-regularized
RL problem, assuming that the original system dynamics is linear in both the control and the state, and that the original reward function is quadratic in the two. This type of linear--quadratic (LQ) problems has occupied the center stage for research in classical control theory for its elegant solutions and its ability to approximate more general nonlinear problems.
One of the most important, conceptual contributions of this paper is to show that the optimal feedback control distribution for balancing exploitation and exploration is {\it Gaussian}. Precisely speaking, if, at any given state, the agent sets out to engage in exploration then she needs look no further than Gaussian distributions.
As is well  known, a pure exploitation optimal distribution is Dirac, and a pure exploration
optimal distribution is uniform. Our results reveal that Gaussian is the right
choice if one seeks a balance between those two extremes.
Moreover, we find that the mean of this optimal exploratory distribution is a function of the current state {\it independent} of the intended exploration level, whereas the variance is a linear function of the entropy regularizing  weight (also called the
``temperature parameter" or ``exploration weight") {\it irrespective} of the
current state. This result highlights a {\it separation} between exploitation and exploration: the former is reflected in the mean and the latter in the variance of
the optimal Gaussian distribution.

There is yet another intriguing result. The higher impact actions have on the  volatility of  the underlying dynamic system, the smaller the variance of the optimal Gaussian distribution needs to be. Conceptually, this implies that a more random environment  in fact contains more learning opportunities and,
hence, is less costly for learning. This theoretical finding provides an interpretation of the recent RL heuristics where injecting
noises leads to better effect of exploration; see, for example, \cite{Lillicrap,Plappert}.

Another contribution of the paper is that we establish a direct connection between the solvability of the exploratory LQ problem and that of the classical LQ problem. We prove that as the exploration weight in the former decays to zero, the optimal Gaussian control distribution and its value function converge respectively to the optimal Dirac measure and the value function of the classical LQ problem, a desirable result for practical learning purposes.

Finally, we observe that, beyond the LQ problems and under proper conditions, the Gaussian distribution remains optimal for a much larger class of control problems, namely, problems with drift and volatility linear in control and reward functions linear or quadratic in control even if the dependence on state is nonlinear. 
Such a family of problems can be seen as the local-linear-quadratic approximation to more general stochastic control problems whose state dynamics are linearized in the control variables  and the reward functions are locally approximated by quadratic control functions (\cite{LT1}, \cite{LT2}). Note also that although such iterative LQ approximation generally has different parameters at different local state-action pairs, our result on the optimality of Gaussian distribution under the exploratory  LQ framework still holds at any local point, and therefore justifies, from a stochastic control perspective, why Gaussian distribution is commonly used in the RL practice for exploration (see, among others, \cite{Ha}, \cite{Ha2}, \cite{Na2}), beyond its simplicity for sampling.

The rest of the paper is organized as follows. In section 2, we motivate
and propose the relaxed stochastic control formulation involving an exploratory
state dynamics and an entropy-regularized reward function for our RL problem.  We then present the associated Hamilton-Jacobi-Bellman (HJB) equation and the optimal control distribution for general entropy-regularized  stochastic control problems in section 3. In section 4, we study the special  LQ problem in both the state-independent and state-dependent reward cases, corresponding respectively to the multi-armed bandit problem and the general RL problem in discrete time, and derive the optimality of Gaussian exploration.  We discuss the connections between the exploratory LQ problem and the classical LQ problem in section 5, establish the solvability equivalence of the two and the convergence result for vanishing exploration, and finally characterize the cost of exploration. We conclude in section 6. Some technical contents and proofs are relegated to Appendices.

\section{An Entropy-Regularized Relaxed Stochastic Control Problem}

We introduce an entropy-regularized relaxed stochastic control problem and provide
its motivation in the context of RL.

Consider a filtered probability space $\left( \Omega ,\mathcal{F},\mathbb{%
P}; \{\mathcal{F}_t\}_{t\geq0}\right) $ in which we define an $\{\mathcal{F}_t\}_{t\geq0}$-adapted Brownian motion $W=\{W_{t},$ $t\geq 0\}.$ An ``action space" $U$ is given, representing the constraint on
an agent's decisions (``controls" or ``actions"). An admissible ({\it open-loop}) control $u=\{u_{t},$ $t\geq 0\}$ is an
$\{\mathcal{F}_t\}_{t\geq0}$-adapted measurable process taking values in $U$.

The classical stochastic control problem is to control the state (or ``feature") dynamics\footnote{We assume that both the state and the control are scalar-valued, only for notational simplicity. There is no essential difficulty to carry out our discussions with these being vector-valued.}
%
%
%
\begin{equation}
dx_{t}^{u}=b(x_{t}^{u},u_{t})dt+\sigma (x_{t}^{u},u_{t})dW_{t},\; t>0;\quad  x_{0}^{u}=x\in \mathbb{R},
\label{classical_state}
\end{equation}%
where (and throughout this paper) $x$ is a generic variable representing a current state of the system dynamics. The aim of the control is to achieve the maximum expected total discounted reward represented by the  value function
\begin{equation}
V^{\text{cl}}\left( x\right) :=\sup_{u\in\mathcal{A}^{\text{cl}}(x)}\mathbb{E}\left[ \left.
\int_{0}^{\infty }e^{-\rho t}r\left( x_{t}^{u},u_{t}\right) dt\right \vert
x_{0}^{u}=x\right],  \label{classical}
\end{equation}%
where $r$ is the reward function, $\rho>0$ is the discount rate, and $\mathcal{A}^{\text{cl}}(x)$ denotes the set of all admissible controls which in general may depend on $x$.

In the classical setting, where the model is fully known (namely, when the functions $b,\sigma$ and $r$ are fully specified) and the dynamic programming is applicable, the optimal control can be derived and represented as a {\it deterministic} mapping from the current state to the action space $U$, $u^*_t=\boldsymbol{u}^*{(x^*_t)}$.
The mapping $\boldsymbol{u}^*$ is called an optimal {\it feedback} control (or ``policy" or ``law"); this feedback control is derived at $t=0$ and {\it will} be carried out
through $[0,\infty)$.\footnote{In general, feedback controls are easier to implement as they respond directly to the {\it current} states of the controlled dynamics.}


In contrast, in the RL setting, where the underlying model is not known and therefore dynamic learning is needed, the agent employs exploration to interact with and learn the
unknown environment through trial and error. The key idea is to model exploration by
a {\it distribution} of controls $\pi=\{\pi_t(u),t\geq0\}$ over the control space $U$ from which each ``trial" is sampled.\footnote{As will be evident below, rigorously speaking, $\pi_t(\cdot)$ is a probability {\it density} function for each $t\geq0$. With a slight abuse of terminology, {\it we will not distinguish a density function from its corresponding probability distribution or probability measure and thus will use these terms interchangeably in this paper}. Such nomenclature is common in the RL literature.}
We can therefore extend the notion of controls to distributions.\footnote{A classical control $u=\{u_{t},$ $t\geq 0\}$ can be regarded as a Dirac distribution (or ``measure") $\pi=\{\pi_t(u),t\geq 0\}$ where $\pi_t(\cdot)=\delta_{u_t}(\cdot)$. In a similar fashion, a feedback policy
$u_t=\boldsymbol{u}{(x^u_t)}$ can be embedded as
 a Dirac measure $\pi_t(\cdot)=\delta_{\boldsymbol{u}{(x^u_t)}}(\cdot)$, parameterized by the current state ${x^u_t}$.}
The agent executes a  
control  for $N$ rounds over the same time
horizon, while at each round, a classical control is sampled from the distribution ${\pi}$. The reward of such a policy becomes accurate
enough when $N$ is large. This procedure, known as \textit{policy evaluation}, is considered  as
a fundamental element of most RL algorithms in practice
(\cite{SB}). Hence, for evaluating such a policy distribution in our continuous time setting, it is necessary to consider the limiting situation  as $N\rightarrow \infty $.

In order to capture the essential idea for doing this, let us first examine the special case when the reward
depends only on the control, namely, $r({x^u_t},u_{t})=$ $r(u_{t}).$ One then
considers $N$ identical independent copies of the control problem in the following way: at round $i$, $i=1,2,\dots ,N,$ a control $u^{i}$ is sampled under the
(possibly random) control distribution $\pi$, and executed for its corresponding copy of the control
problem (\ref{classical_state})--(\ref{classical}).
Then,
at each fixed time $t$, it follows, from the law of large numbers
(and under certain mild technical conditions), that the average reward over $%
[t,t+\Delta t]$, with $\Delta t$ small enough, should satisfy that as $N\rightarrow \infty$,
$$\frac{1}{N}\sum_{i=1}^Ne^{-\rho t}r(u_t^i)\Delta t\xrightarrow{\ \text{a.s.}\ } \mathbb{E}\left[e^{-\rho t}\int_{U}r(u)\pi_t(u)du\Delta t\right].$$

%

For a general reward  $r({x^u_t},u_{t})$ which depends on the state, we first need to describe how exploration
might alter the state dynamics (\ref{classical_state}) by
defining appropriately its
``exploratory" version.
For this, we look at the effect of repetitive learning under a given control distribution, say $\pi $, for $N$ rounds.
Let $W_{t}^{i}$, $i=1,2,\dots ,N$, be $N$
independent sample paths of the Brownian motion $W_{t}$, and $%
x_{t}^{i}$, $i=1,2,\dots ,N$,  be the
 copies of the state process respectively under the controls $u^{i}$, $i=1,2,\dots ,N$, each  sampled from $\pi $.
Then, the increments of these state process copies are, for $i=1,2,\dots ,N$,
\begin{equation}
\Delta x_{t}^{i}\equiv x_{t+\Delta t}^{i}-x_{t}^{i}\  \approx \ b(x_{t}^{i},u_{t}^{i})\Delta t+\sigma
(x_{t}^{i},u_{t}^{i})\left( W_{t+\Delta t}^{i}-W_{t}^{i}\right) ,\quad t\geq
0.  \label{independent_copy}
\end{equation}%
Each such process ${x^{i}}$, $i=1,2,\dots ,N$, can be viewed as an independent
sample from the exploratory state dynamics $X^{\pi }$.
The superscript $\pi $ of $X^{\pi }$ indicates that each ${x^{i}}$ is
generated according to the classical dynamics (\ref{independent_copy}), with
the corresponding $u^{i}$ sampled independently under this policy $%
\pi.$
%


It then follows from  (\ref{independent_copy}) 
and the law of large numbers that, as $N\rightarrow \infty $,
\begin{equation}
\begin{array}{rl}
&\hspace{-2pt}\frac{1}{N}\sum_{i=1}^{N}\Delta x_{t}^{i}\  \approx \  \frac{1}{N}%
\sum_{i=1}^{N}b(x_{t}^{i},u_{t}^{i})\Delta t+\frac{1}{N}\sum_{i=1}^{N}\sigma
(x_{t}^{i},u_{t}^{i})\left( W_{t+\Delta t}^{i}-W_{t}^{i}\right)\\
& \\
\xrightarrow{\ \text{a.s.}\ } & \mathbb{E}\left[ \int_{U}b(X_{t}^{\pi },u)\pi
_{t}(u)du\Delta t\right] +\mathbb{E}\left[ \int_{U}\sigma (X_{t}^{\pi
},u)\pi _{t}(u)du\right] \mathbb{E}\left[ W_{t+\Delta t}-W_{t}\right]\\ \label{drift_convergence}
& \\
=&\mathbb{E}\left[ \int_{U}b(X_{t}^{\pi },u)\pi_{t}(u)du\Delta t\right].\\
\end{array}
\end{equation}
In the above, we have implicitly applied the (reasonable) assumption that both $\pi
_{t}$ and $X_{t}^{\pi }$ are independent of the increments of the Brownian
motion sample paths, which are identically distributed over $[t,t+\Delta t]$.

Similarly, as $N\rightarrow \infty $,
\begin{equation}
\frac{1}{N}\sum_{i=1}^{N}\left( \Delta x_{t}^{i}\right) ^{2}\  \approx \  \frac{1}{N}%
\sum_{i=1}^{N}\sigma ^{2}(x_{t}^{i},u_{t}^{i}){\Delta t}\xrightarrow{\ \text{a.s.}\ }
\mathbb{E}\left[ \int_{U}\sigma ^{2}(X_{t}^{\pi },u)\pi _{t}(u)du{\Delta t}%
\right] .  \label{volatility_convergence}
\end{equation}

As we see, not only {$\Delta x_{t}^{i}$} but also {$%
(\Delta x_{t}^{i})^{2}$} are affected by  repetitive learning under the given
policy $\pi $.

Finally, as the individual state {$x_{t}^{i}$}
is an independent sample from $X_{t}^{\pi }$, we have that {$\Delta x_{t}^{i}$}
and {$(\Delta x_{t}^{i})^{2}$,} $i=1,2,\dots ,N$, are the independent samples from $%
\Delta X_{t}^{\pi }$ and $(\Delta X_{t}^{\pi })^{2}$, respectively. As a result, the law of
large numbers gives that as $N\rightarrow \infty $,
\begin{equation*}
\frac{1}{N}\sum_{i=1}^{N}\Delta x_{t}^{i}\xrightarrow{\ \text{a.s.}\ }\mathbb{E}%
\left[ \Delta X_{t}^{\pi }\right] \  \quad \  \  \text{and }\quad \frac{1}{N}%
\sum_{i=1}^{N}(\Delta x_{t}^{i})^{2}\xrightarrow{\ \text{a.s.}\ }\mathbb{E}\left[
(\Delta X_{t}^{\pi })^{2}\right] .
\end{equation*}%

This interpretation, together with (\ref{drift_convergence}) and (\ref{volatility_convergence}), {\it motivates} us to propose the {\it exploratory version} of the state dynamics, namely,
\begin{equation}\label{new_dynamics}
dX^{\pi}_t=\tilde{b}(X^{\pi}_t, \pi_t)dt+\tilde{\sigma}(X^{\pi}_t, \pi_t)dW_t,\; t>0; \quad X_{0}^{\pi}=x \in \mathbb{R},
\end{equation}
where the coefficients $\tilde{b}(\cdot,\cdot)$ and $\tilde{\sigma}(\cdot,\cdot)$ are defined as
\begin{equation}\label{drift}
\tilde{b}(y,\pi):=\int_U b\left(y,u\right)\pi(u)du,\;\;y\in\mathbb{R},\;\pi\in \mathcal{P}\left( U\right),
\end{equation}
and
\begin{equation}\label{volatility}
\tilde{\sigma}(y,\pi):=\sqrt{\int_U \sigma^2\left(y,u\right)\pi(u)du},\;\;y\in\mathbb{R},\;\pi\in \mathcal{P}\left( U\right),
\end{equation}
with $\mathcal{P}%
\left( U\right) $ being the set of density functions of probability measures on $U$ that are absolutely
continuous with respect to the Lebesgue measure.

We will call (\ref{new_dynamics}) the \textit{exploratory formulation} of the controlled state dynamics, and $\tilde{b}(\cdot,\cdot)$ and $\tilde{\sigma}(\cdot,\cdot)$ in (\ref{drift}) and (\ref{volatility}), respectively, the \textit{exploratory drift} and the \textit{exploratory volatility}.\footnote{The exploratory formulation (\ref{new_dynamics}), inspired by repetitive learning, is consistent with the notion of relaxed control in the control literature (see, for example, \cite{Nisio, Karouni, Zhou, Kurtz1, Kurtz2}). Indeed, let $f:\mathbb{R}\mapsto \mathbb{R}$ be a bounded and twice continuously differentiable function, and consider the infinitesimal generator associated to the classical controlled process (\ref{classical_state}),
\begin{equation*}
\mathbb{L}[f](x,u):=\frac{1}{2}\sigma^2(x,u)f''(x)+b(x,u)f'(x),\;\;x\in \mathbb{R},\; u\in U.
\end{equation*}
In the classical relaxed control framework, the controlled dynamics is replaced by the  six-tuple $(\Omega,\mathcal{F},\mathbb{F}=\{\mathcal{F}_t\}_{t\geq 0},\mathbb{P}, X^{\pi},\pi)$, such that $X^{\pi}_0=x$ and
\begin{equation}\label{weak_relaxed}
f(X^{\pi}_t)-f(x)-\int_0^t\int_U\mathbb{L}[f](X^{\pi}_t,u)\pi_t(u)duds,\;\;t\geq 0, \quad \text{is a}\ \mathbb{P}-\text{martingale}.
\end{equation}
It is easy to verify that our  proposed exploratory formulation (\ref{new_dynamics}) agrees with the above martingale formulation.
However, even though the mathematical formulations are equivalent, the motivations of the two are entirely different. Relaxed control was introduced to mainly deal with the existence of optimal controls, whereas the exploratory formulation here is motivated by learning and exploration in  RL. }


In a similar fashion, as $N\rightarrow \infty$,
\begin{equation}
\frac{1}{N}\sum_{i=1}^Ne^{-\rho t}r(x^i_t,u^i_t)\Delta t\xrightarrow{\ \text{a.s.}\ } \mathbb{E}\left[e^{-\rho t}\int_U r(X^{\pi}_t,u)\pi_t(u)du\Delta t\right].  \label{reward_convergence}
\end{equation}
Hence, the reward function $r$ in (\ref{classical}) needs to be modified to the {\it exploratory reward}
\begin{equation}\label{reward}
\tilde{r}\left(y,\pi\right):=\int_U r\left(y,u\right)\pi(u)du, \;\;y\in\mathbb{R},\;\pi\in \mathcal{P}\left( U\right).
\end{equation}

If, on the other hand, the model is fully known, exploration would not be needed at all and the control distributions would all degenerate to the Dirac measures,  and we would then be in the realm of the classical stochastic control. Thus, in the RL context, we need to
add a ``regularization term" to account for model uncertainty and to encourage exploration. We use Shanon's \textit{differential entropy} to measure the level
of exploration:
$$\mathcal{H}(\pi):=-\int_U\pi(u)\ln\pi(u)du,\;\;\pi\in \mathcal{P}\left( U\right).$$

We therefore introduce the following entropy-regularized relaxed stochastic control problem
{\small
\begin{equation}\label{entropy_goal}
V\left( x\right) :=\sup_{\pi \in \mathcal{A}(x)}\mathbb{E}\left[ \left.
\int_{0}^{\infty }e^{-\rho t}\left( \int_{U}r\left( X_{t}^{\pi},u\right) \pi _{t}\left( u\right) du-\lambda \int_{U}\pi _{t}(u)\ln \pi
_{t}(u)du\right) dt\right \vert X_{0}^{\pi}=x\right]
\end{equation}}%
where $\lambda>0$ is an exogenous exploration weight parameter capturing the
trade-off between exploitation (the original reward function) and
exploration (the entropy), $\mathcal{A}(x)$ is the set of the admissible control distributions (which may in general depend on $x$), and $V$ is the value function.\footnote{In the RL community, $\lambda$ is also known as the temperature parameter, which we will be using occasionally.}

The precise definition of $\mathcal{A}(x)$ depends on the specific dynamic model
under consideration and the specific problems one wants to solve, which may vary from case to case. Here, we first provide some of the ``minimal"  requirements for $\mathcal{A}(x)$.
Denote by $\mathcal{B}(U)$ the Borel algebra on $U$.
An admissible control distribution is a measure-valued (or precisely a density-function-valued) process $\pi=\{\pi
_{t},$ $t\geq 0\}$ satisfying at least the following properties:

\smallskip

(i) for each $t\geq 0$, $\pi _{t}\in \mathcal{P}(U)$ a.s.;

(ii) for each $A\in \mathcal{B}(U)$, $\{\int_A\pi _{t}(u)du,t\geq0\} $
is $\mathcal{F}_{t}$-progressively measurable;

(iii) the stochastic differential equation (SDE) (\ref{new_dynamics}) has a unique
strong solution $X^{\pi}=\{X_{t}^{\pi },t\geq 0\}$ if  $\pi $ is applied;

(iv) the expectation on the right hand side of (\ref{entropy_goal}) is finite.

\smallskip


Naturally, there could be additional requirements depending on specific problems. For the linear--quadratic control case, which will be the main focus of the paper, we define $\mathcal{A}(x)$ precisely in section 4.

\medskip

Finally, analogous to  the classical control formulation, $\mathcal{A}(x)$ contains {\it open-loop} control distributions that are measure-valued {\it stochastic processes}.
We will also consider {\it feedback} control distributions. Specifically, a {\it deterministic} mapping $\boldsymbol{\pi}(\cdot;\cdot)$ is called a feedback control (distribution) if i) $\boldsymbol{\pi}(\cdot;x)$ is a density function for each $x\in\mathbb{R}$; ii)  the following SDE (which is the system dynamics after the feedback law $\boldsymbol{\pi}(\cdot;\cdot)$ is applied)
\begin{equation}\label{new_dynamics_feedback}
dX_t=\tilde{b}(X_t, \boldsymbol{\pi}(\cdot;X_t))dt+\tilde{\sigma}(X^{\pi}_t, \ \boldsymbol{\pi}(\cdot;X_t))dW_t,\; t>0; \quad X_{0}=x \in \mathbb{R}
\end{equation}
has a unique strong solution $\{X_t;t\geq0\}$; and iii) the open-loop control
$\pi=\{\pi
_{t},$ $t\geq 0\}\in \mathcal{A}(x)$ where $\pi_{t}:=\boldsymbol{\pi}(\cdot;X_t)$. In this case, the open-loop control $\pi$ is said to be
{\it generated} from the feedback control law $\boldsymbol{\pi}(\cdot;\cdot)$ with respect to $x$.

\section{HJB Equation and Optimal Control Distributions}

We present the general procedure for solving the optimization problem (\ref{entropy_goal}). The arguments are informal and a rigorous analysis will be carried out in the next section.

To this end, applying the classical Bellman's principle of optimality, we have
\begin{equation*}
V(x)=\sup_{\pi \in \mathcal{A}(x)}\mathbb{E}\left[ \left. \int_{0}^{s}e^{-\rho
t}\left( \tilde{r}\left( X_{t}^{\pi },\pi _{t}\right) +\lambda \mathcal{H}%
\left( \pi _{t}\right) \right) dt+e^{-\rho s}V\left( X_{s}^{\pi }\right)
\right \vert X_{0}^{\pi }=x\right],\;s>0.
\end{equation*}%
Proceeding with standard  arguments, we deduce that $V$ satisfies the Hamilton-Jacobi-Bellmam (HJB) equation
\begin{equation*}
\rho v(x)=\max_{\pi \in \mathcal{P}\left( U\right) }\left( \tilde{r}(x,\pi
)-\lambda \int_{U}\pi (u)\ln \pi (u)du+\frac{1}{2}\tilde{\sigma}^{2}(x,\pi
)v^{\prime \prime }(x)\right.
\end{equation*}%
\begin{equation}
\left. +\tilde{b}(x,\pi )v^{\prime }(x)\right) ,\;\; x\in \mathbb{R}, \label{HJB}
\end{equation}%
 or
\begin{equation}\label{verif}
\rho v(x)=\max_{\pi\in \mathcal{P}\left( U\right) }\int_{U}\left( r(x,u)-\lambda \ln \pi (u)+\frac{1}{2}{%
\sigma }^{2}(x,u)v^{\prime \prime }\left( x\right) +b\left( x,u\right)
v^{\prime }(x)\right) \pi (u)du,
\end{equation}%
where $v$ denotes the generic unknown function of the equation. Recalling that $\pi\in \mathcal{P}\left( U\right)$ if and only if
\begin{equation}\label{constrained_problem}
\int_{U}\pi (u)du=1\quad \text{and}\quad \pi (u)\geq
0\  \text{a.e.}\quad  \text{on}\ U,
\end{equation}%
we can solve the (constrained) maximization problem on the right hand side of (\ref{verif}) to get a feedback control:
\begin{equation}
\boldsymbol{\pi} ^{\ast }(u;x)=\frac{\exp \left( \frac{1}{\lambda }\left( r(x,u)+\frac{1}{%
2}\sigma ^{2}\left( x,u\right) v^{\prime \prime }(x)+b\left( x,u\right)
v^{\prime }(x)\right) \right) }{\int_{U}\exp \left( \frac{1}{\lambda }\left(
r(x,u)+\frac{1}{2}\sigma ^{2}\left( x,u\right) v^{\prime \prime }(x)+b\left(
x,u\right) v^{\prime }(x)\right) \right) du}.  \label{optimizer}
\end{equation}
For each given initial state $x\in\mathbb{R}$, this feedback control in turn generates an optimal open-loop control
\begin{equation}
\pi _{t}^{\ast }:=\boldsymbol{\pi}^{\ast }(u;X_{t}^{\ast })=\frac{\exp\left( \frac{1}{\lambda }%
\left( r(X_{t}^{\ast },u)+\frac{1}{2}\sigma ^{2}(X_{t}^{\ast },u)v^{\prime
\prime }(X_{t}^{\ast })+b(X_{t}^{\ast },u)v^{\prime }(X_{t}^{\ast })\right)\right)
}{\int_{U}\exp\left( \frac{1}{\lambda }%
\left( r(X_{t}^{\ast },u)+\frac{1}{2}\sigma ^{2}(X_{t}^{\ast },u)v^{\prime
\prime }(X_{t}^{\ast })+b(X_{t}^{\ast },u)v^{\prime }(X_{t}^{\ast })\right)\right) du},  \label{optimal control}
\end{equation}%
where $\{X_{t}^{\ast }$, $t\geq 0\}$ solves (\ref{new_dynamics}) when the feedback control  law $\boldsymbol{\pi}^{\ast }(\cdot;\cdot)$ is applied and assuming that $\{\pi _{t}^{\ast },t\geq0\}\in \mathcal{A}(x).$\footnote{We stress  that the procedure described in this section, while standard, is informal. A rigorous treatment requires a precise definition of $\mathcal{A}(x)$ and a verification that indeed $\{\pi _{t}^{\ast },t\geq0\}\in \mathcal{A}(x).$
This will be carried out in the study of the linear--quadratic case in the following sections.}

Formula (\ref{optimizer}) above elicits qualitative understanding about optimal explorations. We further investigate this in the next section.

\section{The Linear--Quadratic Case}

We now focus on the family of entropy-regularized (relaxed) stochastic control  problems with
linear state dynamics and quadratic rewards, in which
\begin{equation}
b(x,u)=Ax+Bu\;\quad \text{and}\quad \;\sigma(x,u)=Cx+Du,\;\;x,u\in \mathbb{R},
\label{LQ_SDE}
\end{equation}%
where $A,B,C,D\in \mathbb{R}$, 
and
\begin{equation}
r(x,u)=-\left( \frac{M}{2}x^{2}+Rxu+\frac{N}{2}u^{2}+Px+Qu\right) ,\;\;x,u\in \mathbb{R}
\label{LQ_general_reward}
\end{equation}%
where $M\geq 0$, $N>0,$ $R,P,Q\in \mathbb{R}$.

In the classical control literature, this type of linear--quadratic (LQ) control problems is one of the most important,
not only because  it  admits elegant and simple solutions but also because more complex, nonlinear problems
can be approximated by LQ problems. As is standard with  LQ control,
we assume that the control set is unconstrained, namely, $U=\mathbb{R}$.

Fix an initial state $x\in \mathbb{R}$. For each open-loop control $\pi \in \mathcal{A}(x),$ denote its
mean and variance processes $\mu _{t},$ $\sigma _{t}^{2},t\geq 0,$ by
\begin{equation}\label{mean-variance}
\mu _{t}:=\int_{\mathbb{R}}u\pi _{t}(u)du\quad \  \  \text{and}\quad \ \ \sigma
_{t}^{2}:=\int_{\mathbb{R}}u^{2}\pi _{t}(u)du-\mu _{t}^{2}\text{ }.
\end{equation}%
Then, the state SDE (\ref{new_dynamics}) becomes
\begin{equation}
\begin{array}{rl}
dX_{t}^{\pi }=&\left( AX_{t}^{\pi }+B\mu _{t}\right) dt+\sqrt{%
C^{2}(X_{t}^{\pi })^{2}+2CDX_{t}^{\pi }\mu _{t}+D^{2}(\mu _{t}^{2}+\sigma
_{t}^{2})}\ dW_{t}\\
& \\
=&\left( AX_{t}^{\pi }+B\mu _{t}\right) dt+\sqrt{\left( CX_{t}^{\pi }+D\mu
_{t}\right) ^{2}+D^{2}\sigma _{t}^{2}}\ dW_{t},\;t>0; \quad X_{0}^{\pi } =x.
\end{array}\label{LQ_dynamics}
\end{equation}
%
Further, denote
\begin{equation*}
L(X_{t}^{\pi },\pi _{t}):=\int_{\mathbb{R}}r(X_{t}^{\pi },u)\pi
_{t}(u)du-\lambda \int_{\mathbb{R}}\pi _{t}(u)\ln \pi _{t}(u)du.
\end{equation*}%
Next, we specify the associated set of admissible controls $\mathcal{A}(x)$: $\pi\in \mathcal{A}(x)$, if

\smallskip

(i) for each $t\geq 0$, $\pi _{t}\in \mathcal{P}(\mathbb{R})$ a.s.;

(ii) for each $A\in \mathcal{B}(\mathbb{R})$, $\{\int_A\pi _{t}(u)du,t\geq0\} $
is $\mathcal{F}_{t}$-progressively measurable;

(iii) for each $t\geq 0$, $\mathbb{E}\left[
\int_{0}^{t}\left( \mu _{s}^{2}+\sigma _{s}^{2}\right) ds\right] <\infty$;

(iv) with $\{X^{\pi}_t,t\geq 0\}$ solving (\ref{LQ_dynamics}), $\liminf_{T\rightarrow \infty} e^{-\rho T}\mathbb{E}\big[\left(X^{\pi}_T\right)^2\big]=0;$

(v) with $\{X^{\pi}_t,t\geq 0\}$ solving (\ref{LQ_dynamics}), $\mathbb{E}\left[ \int_{0}^{\infty }e^{-\rho t}\left
\vert L(X_{t}^{\pi },\pi _{t})\right \vert dt\  
\right] <\infty $.

\smallskip

In the above, condition (iii) is to ensure that for any
$\pi\in \mathcal{A}(x)$, both the drift and volatility terms of (\ref%
 {LQ_dynamics}) satisfy a global Lipschitz condition and a type of linear growth condition in the state variable and, hence, the SDE (\ref%
{LQ_dynamics}) admits a unique strong solution $%
X^{\pi }$. Condition (iv) renders dynamic programming and verification technique applicable
for the model, as will be evident in the sequel. Finally, the reward is finite under condition (v).
%


We are now ready to introduce the entropy-regularized relaxed stochastic LQ problem
\begin{equation}
V(x)=\sup_{\pi \in \mathcal{A}(x)}\mathbb{E}\left[ \int_{0}^{\infty
}e^{-\rho t}\left( \int_{\mathbb{R}}r(X_{t}^{\pi },u)\pi _{t}(u)du-\lambda
\int_{\mathbb{R}}\pi _{t}(u)\ln \pi _{t}(u)du\right) dt\Big | X^{\pi}_0=x\right]  \label{LQ-V}
\end{equation}%
with $r$ as in (\ref{LQ_general_reward}) and $X^{\pi }$ as in (\ref%
{LQ_dynamics}).

In the following two subsections, we
derive explicit solutions for both cases of state-independent and state-dependent rewards.

\subsection{The case of state-independent reward }

We start with the technically less challenging case {$r(x,u)=-\left(\frac{N}{2}u^2+Qu\right)$,} namely, the reward is state (feature) independent.
In this case, the system dynamics becomes irrelevant. However, the problem is still interesting in its own right as it corresponds to the state-independent  RL problem, which is known as the continuous-armed bandit problem in the continuous time setting (\cite{con_bandit_1, con_bandit_2}).

%
Following the derivation in the previous section, the optimal feedback control in (\ref{optimizer}) reduces to
\begin{equation*}
\boldsymbol{\pi}^{\ast }\left( u;x\right) =\frac{\exp \left( \frac{1}{\lambda }\left(\left( -%
\frac{N}{2}u^{2}+Qu\right)+\frac{1}{2}(Cx+Du)^{2}v^{\prime \prime
}(x)+(Ax+Bu)v^{\prime }(x)\right) \right) }{\int_{\mathbb{R}}\exp \left(
\frac{1}{\lambda }\left(\left( -%
\frac{N}{2}u^{2}+Qu\right)+\frac{1}{2}(Cx+Du)^{2}v^{\prime
\prime }(x)+(Ax+Bu)v^{\prime }(x)\right) \right) du}
\end{equation*}%
\begin{equation}
=\frac{\exp \left( -\left( u-\frac{CDxv^{\prime \prime }(x)+Bv^{\prime }(x)-Q}{%
N-D^{2}v^{\prime \prime }(x)}\right) ^{2}/\frac{2\lambda }{N-D^{2}v^{\prime
\prime }(x)}\right) }{\int_{\mathbb{R}}\exp \left( -\left( u-\frac{%
CDxv^{\prime \prime }(x)+Bv^{\prime }(x)-Q}{N-D^{2}v^{\prime
\prime }(x)}\right) ^{2}/\frac{2\lambda }{N-D^{2}v^{\prime \prime }(x)}%
\right) \ du}.  \label{pi-feedback}
\end{equation}%

Therefore, the optimal feedback control distribution appears to be \textit{%
Gaussian}. More specifically, at any present state $x$, the agent should embark on
 exploration according to the Gaussian  distribution  with mean and variance given, respectively, by {$\frac{%
CDxv^{\prime \prime }(x)+Bv^{\prime }(x)-Q%
}{N-D^{2}v^{\prime \prime }(x)}$} and {$\frac{\lambda }{%
N-D^{2}v^{\prime \prime }(x)}$.} Note that in deriving the above, we have used that $N-D^{2}v^{\prime \prime }(x)>0$, {$x\in \mathbb{R}$,} a condition that will be justified and discussed later on.

\begin{remark}\label{Gaussian_approximate}
If we examine the derivation of  (\ref{pi-feedback}) more closely, we easily see that the
optimality of the Gaussian distribution still holds as long as the state dynamics is linear in control and the reward is quadratic in control, whereas the  dependence of both on the state can be generally {\it nonlinear}.
%
\end{remark}

Substituting (\ref{pi-feedback}) back to (\ref{HJB}), the HJB equation becomes, after straightforward calculations,
\begin{equation}\label{HJB_first_case}
\begin{array}{rl}
\rho v(x)=&\frac{\left( CDxv^{\prime \prime }(x)+Bv^{\prime }(x)-Q\right) ^{2}}{%
2\left( N-D^{2}v^{\prime \prime }(x)\right) }+\frac{\lambda }{2}\left( \ln
\left( \frac{2\pi e\lambda }{N-D^{2}v^{\prime \prime }(x)}\right) -1\right)\\
&\\
&\;\;+\frac{1}{2}C^{2}x^{2}v^{\prime \prime }(x)+Axv^{\prime }(x).
\end{array}
\end{equation}%
In general, this nonlinear equation has \textit{multiple} smooth solutions,
even among quadratic polynomials that satisfy $N-D^{2}v^{\prime \prime
}(x)>0$.
One such solution is a constant, given by
\begin{equation}
v\left( x\right)= v:=\frac{Q^2}{2\rho N}+\frac{\lambda }{2\rho }\left( \ln \frac{2\pi e\lambda }{N}%
 -1\right),  \label{constant_solution}
\end{equation}%
with the corresponding optimal feedback control distribution  (\ref{pi-feedback})\ being
\begin{equation}\label{pi_new_add}
\boldsymbol{\pi} ^{\ast }\left( u;x\right) =\frac{e^{-\left(u+\frac{Q}{N}\right)^2/ \frac{2\lambda}{N}}}{\int_{%
\mathbb{R}}e^{-\left(u+\frac{Q}{N}\right)^2/ \frac{2\lambda}{N}}du}.
\end{equation}



It turns out the right hand side of the above is independent of the current state $x$. So the optimal feedback control distribution is the same across different states.
Note that the classical LQ problem with the state-independent reward function $r(x,u)=-\left(\frac{N}{2}u^2+Qu\right)$ clearly has the optimal control $u^*=-\frac{Q}{N}$, which is also state-independent and is nothing else than the mean of the optimal Gaussian feedback control $\boldsymbol{\pi}^*$.

The following result establishes that the constant $v$ is indeed the value function $V$ and that the feedback control $\boldsymbol{\pi}^*$ defined by (\ref{pi_new_add}) is optimal. Henceforth, we denote, for notational convenience, by $\mathcal{N}(\cdot|\mu,\sigma^2 )$ the density function of a Gaussian
random variable with mean $\mu$ and variance $\sigma^2$.

\begin{theorem}\label{Prop_1} If $r(x,u)=-\left(\frac{N}{2}u^2+Qu\right)$, then
the value function in (\ref{LQ-V}) is given by
\begin{equation*}
V(x)=\frac{Q^2}{2\rho N}+\frac{\lambda }{2\rho }\left( \ln \frac{2\pi e\lambda }{N}%
 -1\right),\;\;x\in \mathbb{R},
\end{equation*}%
and the optimal feedback control distribution is Gaussian, with
$$\boldsymbol{\pi}^{\ast }(u;x)=\mathcal{N}\left(u \, \big| -\frac{Q}{N},\frac{\lambda}{N}\right).$$
Moreover, the associated optimal state process, $\{X^*_t, t\geq 0\}$, under $\boldsymbol{\pi}^{\ast}(\cdot;\cdot)$ is the unique solution of the SDE
\begin{equation}
dX_{t}^{\ast }=\left(AX_{t}^{\ast }-\frac{BQ}{N}\right)\, dt+\sqrt{\left(CX_{t}^{\ast }-\frac{DQ}{N}\right)^2+\frac{\lambda
D^{2}}{N}}\, dW_{t}, \ X_0^*=x. \label{star_SDE_1}
\end{equation}%

%
%
\end{theorem}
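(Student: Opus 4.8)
The plan is to read this as a \emph{verification theorem}. The constant $v$ in (\ref{constant_solution}) and the Gaussian feedback law (\ref{pi_new_add}) have already been produced informally as a solution of the HJB equation together with its pointwise maximizer; what remains is to prove rigorously that $V(x)=v$ for every $x\in\mathbb{R}$ and that $\boldsymbol{\pi}^*$ lies in $\mathcal{A}(x)$ and attains the supremum in (\ref{LQ-V}). I would split this into the two inequalities $V(x)\le v$ and $V(x)\ge v$, the first against an arbitrary competitor $\pi\in\mathcal{A}(x)$ and the second by exhibiting $\boldsymbol{\pi}^*$ as an optimizer.

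The analytic core is the \emph{static} entropy-regularized maximization. For fixed $x$, and since here $r$ does not depend on $x$, the functional $\pi\mapsto L(x,\pi)=\int_{\mathbb{R}}r(u)\pi(u)\,du-\lambda\int_{\mathbb{R}}\pi(u)\ln\pi(u)\,du$ is strictly concave over $\mathcal{P}(\mathbb{R})$, and the Gibbs variational principle identifies its unique maximizer as the Boltzmann density $\pi\propto e^{r(\cdot)/\lambda}$ with maximal value $\lambda\ln\int_{\mathbb{R}}e^{r(u)/\lambda}\,du$. Because $N>0$, the quadratic $r(u)=-(\tfrac{N}{2}u^2+Qu)$ makes $e^{r/\lambda}$ integrable; completing the square shows the maximizer is exactly the Gaussian $\mathcal{N}(\,\cdot\mid -Q/N,\lambda/N)$ of (\ref{pi_new_add}), and a direct Gaussian-integral computation shows the maximal value equals $\rho v$ with $v$ as in (\ref{constant_solution}) (the $-1$ in $v$ cancels the $\ln e$). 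This simultaneously verifies that the constant $v$ solves the HJB equation and yields the pointwise bound $L(y,\pi)\le\rho v$ for every $y\in\mathbb{R}$ and every $\pi\in\mathcal{P}(\mathbb{R})$.

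The verification itself is then unusually clean, precisely because $v$ is constant (so $v'=v''=0$) and $r$ is state-independent, making the dynamics irrelevant to the bounds. For any $\pi\in\mathcal{A}(x)$ the pointwise inequality gives $L(X_t^\pi,\pi_t)\le\rho v$ almost surely, whence, using condition (v) to justify integrability and Fubini, $\mathbb{E}\big[\int_0^\infty e^{-\rho t}L(X_t^\pi,\pi_t)\,dt\big]\le\int_0^\infty e^{-\rho t}\rho v\,dt=v$, so $V(x)\le v$. (Equivalently one may apply It\^o's formula to $e^{-\rho t}v(X_t^\pi)$; the constant $v$ annihilates both the stochastic integral and the transversality term, so no localization or growth control is needed for this direction.) For the reverse inequality I would run the same computation along $\boldsymbol{\pi}^*$: since $\boldsymbol{\pi}^*$ is the maximizer, equality $L(X_t^*,\boldsymbol{\pi}^*)=\rho v$ holds for all $t$, giving expected reward exactly $v$ and hence $V(x)\ge v$. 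Substituting the constant mean $\mu_t\equiv -Q/N$ and variance $\sigma_t^2\equiv\lambda/N$ into (\ref{LQ_dynamics}) recovers the claimed state SDE (\ref{star_SDE_1}).

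The main obstacle is the part that these bounds quietly presuppose: that $\boldsymbol{\pi}^*$ actually belongs to $\mathcal{A}(x)$, i.e. conditions (i)--(v). Conditions (i), (ii), (iii) and (v) are immediate because the mean and variance of $\boldsymbol{\pi}^*$ are constant and $L(X_t^*,\boldsymbol{\pi}^*)$ is constant; well-posedness of (\ref{star_SDE_1}) follows from noting that its drift is affine and its diffusion coefficient $\sqrt{(Cy-DQ/N)^2+\lambda D^2/N}$ is globally Lipschitz in $y$ with constant $|C|$, so a unique strong solution exists. The delicate point is condition (iv), the transversality requirement $\liminf_{T\to\infty}e^{-\rho T}\mathbb{E}[(X_T^*)^2]=0$: here I would derive the linear ODE $m'(t)=(2A+C^2)m(t)+(\text{affine lower-order terms})$ for $m(t)=\mathbb{E}[(X_t^*)^2]$ by It\^o and expectation, conclude via Gronwall that $m(t)$ grows at most like $e^{(2A+C^2)t}$, and verify that the discounted second moment vanishes. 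This is where one must be careful about the interplay between $\rho$ and $2A+C^2$, and it is the only step where genuine estimates, rather than the algebra of Gaussian integrals, are required.
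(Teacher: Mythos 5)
Your argument for the two inequalities $V(x)\le v$ and $V(x)\ge v$ is correct and is essentially the paper's own verification proof: the paper likewise starts from the constant HJB solution $v$ of (\ref{constant_solution}), uses the pointwise optimality of the Gibbs/Gaussian density on the right-hand side of (\ref{HJB}), integrates the resulting inequality (resp.\ the equality along $\boldsymbol{\pi}^{\ast}$) against $e^{-\rho t}$, and sends $T\to\infty$ by dominated convergence. Your Gibbs-variational computation of the maximal value $\lambda\ln\int_{\mathbb{R}}e^{r(u)/\lambda}\,du$, with the $-1$ cancelling $\ln e$, is exactly how the constant in (\ref{constant_solution}) arises, and the derivation of (\ref{star_SDE_1}) by substituting the constant mean and variance into (\ref{LQ_dynamics}) matches the paper.

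The gap is in your treatment of admissibility condition (iv). You propose to prove $\liminf_{T\to\infty}e^{-\rho T}\mathbb{E}\big[(X_T^{\ast})^2\big]=0$ by deriving the ODE for $m(T)=\mathbb{E}\big[(X_T^{\ast})^2\big]$ and a Gronwall bound of order $e^{(2A+C^2)T}$, and you yourself flag that this requires ``care about the interplay between $\rho$ and $2A+C^2$.'' But Theorem \ref{Prop_1} imposes no relation between $\rho$ and $2A+C^2$ --- Assumption \ref{Assumption} is invoked only for the state-dependent case --- and when $2A+C^2>\rho$ the second moment of $X^{\ast}$ genuinely grows faster than $e^{\rho T}$, so condition (iv) as literally stated can fail and cannot be verified. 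The paper's resolution is different, and it is the one your own verification argument already suggests: because $v$ is constant, the transversality term $e^{-\rho T}\mathbb{E}\left[v(X_T^{\pi})\right]=e^{-\rho T}v$ vanishes automatically for every $\pi$, so condition (iv) plays no role in either inequality; the paper states explicitly (in a footnote to the proof) that since the state process is irrelevant in this case, condition (iv) need not be verified, i.e.\ it is effectively dropped from the admissibility requirements rather than proved. If you insist on checking (iv) as written, your proof cannot be completed without adding a hypothesis on $\rho$ that the theorem does not contain; the correct move is to observe that the condition is not needed here.
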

\begin{proof}
Let $v(x)\equiv v$ be the constant solution to the HJB equation (\ref{HJB_first_case}) defined by (\ref{constant_solution}).
Then the corresponding feedback optimizer $\boldsymbol{\pi}^{\ast }(u;x)=\mathcal{N}\left(u \, \big| -\frac{Q}{N},\frac{\lambda}{N}\right)$ follows immediately from (\ref{pi-feedback}). Let 
$\pi^{\ast}=\{\pi^{\ast}_t,t\geq0\}$  be the open-loop control generated from $\boldsymbol{\pi}^{\ast }(\cdot;\cdot)$. It is straightforward to verify that $\pi^{\ast} \in \mathcal{A}(x)$.\footnote{Since the state process is irrelevant in the current case, it is not necessary to verify the admissibility condition (iv).}

Now, for any $\pi \in \mathcal{A}(x)$
and $T\geq 0$, it follows from the HJB equation (\ref{HJB}) that
\begin{equation*}
e^{-\rho T}v=v-\int_{0}^{T}e^{-\rho t}\rho vdt
\end{equation*}%
\begin{equation*}
\leq v+\mathbb{E}\left[ \int_{0}^{T}e^{-\rho t}\left( \int_{\mathbb{R}}\left(\frac{N}{2}u^2+Qu\right)\pi _{t}(u)du+\lambda \int_{\mathbb{R}}\pi _{t}(u)\ln \pi
_{t}(u)du\right) dt\right] .
\end{equation*}%
Since $\pi \in \mathcal{A}(x)$, the dominated convergence theorem yields that, as
$T\rightarrow \infty $,
\begin{equation*}
v\geq \mathbb{E}\left[ \int_{0}^{\infty }e^{-\rho t}\left( \int_{\mathbb{R}}-%
\left(\frac{N}{2}u^2+Qu\right)\pi _{t}(u)du-\lambda \int_{\mathbb{R}}\pi _{t}(u)\ln \pi
_{t}(u)du\right) dt\right]
\end{equation*}%
and, thus, $v\geq V(x)$, for $\forall x\in \mathbb{R}$. On the other hand,
$\boldsymbol{\pi}^{\ast}$ has been derived as the maximizer for the right hand side of (\ref{HJB}); hence
\begin{equation*}
\rho v=\int_{\mathbb{R}}-\left(\frac{N}{2}u^2+Qu\right)\pi _{t}^{\ast }(u)du-\lambda \int_{%
\mathbb{R}}\pi _{t}^{\ast }(u)\ln \pi _{t}^{\ast }(u)du.
\end{equation*}%
Replacing the inequalities by equalities in the above argument and sending $%
T $ to infinity, we conclude that
\begin{equation*}
V(x)= v=\frac{Q^2}{2\rho N}+\frac{\lambda }{2\rho }\left( \ln \frac{2\pi e\lambda }{N}%
 -1\right) ,
\end{equation*}%
for $x\in \mathbb{R}$.

Finally, the exploratory dynamics equation (\ref{star_SDE_1}) follows readily from
substituting $\mu _{t}^{\ast }=-\frac{Q}{N}$ and $(\sigma _{t}^{\ast
})^{2}=\frac{\lambda}{N} $, $t\geq 0$, into (\ref{LQ_dynamics}).
\end{proof}

\medskip

It is possible to obtain {\it explicit solutions} to (\ref{star_SDE_1}) for most cases, which may be useful in designing exploration algorithms based on the theoretical results derived in this paper. We relegate this discussion about solving (\ref{star_SDE_1}) explicitly to Appendix A.

The above solution suggests that when the reward is independent of the
state, so is the optimal feedback control distribution with density $\mathcal{N}(\cdot\, |-\frac{Q}{N},\frac{\lambda}{N} )$. This is intuitive since objective (\ref%
{entropy_goal}) in this case does not explicitly distinguish between states.\footnote{Similar observation can be made for the (state-independent) pure entropy maximization
formulation, where the goal is to solve
\begin{equation}\label{only_entropy}
\sup_{\pi\in\mathcal{A}(x)}\mathbb{E}\left[-\int_0^\infty e^{-\rho t} \left(\int_U\pi_t(u)\ln \pi_t(u)du\right)dt\Big| \, X_0^{\pi}=x\right].
\end{equation}
This problem becomes relevant when $\lambda\rightarrow \infty$ in the entropy-regularized objective (\ref{LQ-V}), corresponding to the extreme case of pure exploration without considering exploitation (i.e., without maximizing any reward). To solve problem (\ref{only_entropy}), we can pointwisely maximize its integrand, leading to the state-independent optimization problem
\begin{equation}\label{Uniform}
\sup_{\pi\in \mathcal{P}(U)}\left(-\int_U\pi(u)\ln \pi(u)du\right).
\end{equation}
It is then straightforward that the optimal control distribution $\pi^*$ is, for all $t\geq 0$, the uniform distribution. This is in accordance with the traditional static setting  where uniform distribution achieves maximum entropy (\cite{Sh}).}

A remarkable feature of the derived optimal distribution $\mathcal{N}(\cdot\, |-\frac{Q}{N},\frac{\lambda}{N} )$ is that its mean coincides with the optimal control of the original, non-exploratory LQ problem, whereas the variance is determined by
the temperature parameter $\lambda$. In the context of continuous-armed bandit problem, this result stipulates that the mean is concentrated on  the current incumbent of the best arm  and the
variance is determined  by the temperature parameter.
The more weight put on the level of exploration, the more spread out the exploration becomes around the current best arm.
This type of exploration/exploitation strategies is clearly intuitive and, in turn, gives a guidance on how to actually choose the temperature parameter in practice: it is nothing else than the variance of the exploration the agent wishes to engage in
(up to a scaling factor being the quadratic coefficient of the control in the reward function).

However, we shall see in the next section that when the reward
depends on the local state, the optimal feedback control distribution genuinely 
depends on the state.

\subsection{The case of state-dependent reward}

%
%

%

We now consider the general case with the reward depending on both the control and the state, namely,
$$r(x,u)=-\left( \frac{M}{2}x^{2}+Rxu+\frac{N}{2}u^{2}+Px+Qu\right) ,\quad x,u\in \mathbb{R}.$$
%
%
%
%
We will be working with the following assumption.
\begin{assumption}\label{Assumption}
{The discount rate satisfies ${\rho} > 2A+C^{2}+\max\left(\frac{D^2R^2-2NR(B+CD)}{N},\, 0\right).$}
\end{assumption}

This assumption requires a sufficiently large discount rate, or (implicitly) a sufficiently short planning horizon. Such an assumption is standard
in infinite horizon
problems with running rewards.

Following an analogous argument as for (\ref{pi-feedback}), we deduce that a candidate optimal feedback control is given by
\begin{equation}\label{pi_new_add_2}
\boldsymbol{\pi}^{\ast }(u;x)=\mathcal{N}\left( u\, \Big | \  \
\frac{CDxv^{\prime \prime }(x)+Bv^{\prime }(x)-Rx-Q}{N-D^{2}v^{\prime \prime
}(x)}\ ,\  \frac{\lambda }{N-D^{2}v^{\prime \prime }(x)}\right).
\end{equation}%

In turn, denoting by $\mu^* (x)$ and $(\sigma^* (x))^{2}$ the mean and variance of $\boldsymbol{\pi}^*(\cdot;x)$ given above, the HJB equation (\ref{HJB}) becomes
\[
\begin{array}{rl}
\rho v(x)=&\int_{\mathbb{R}}-\left( \frac{M}{2}x^{2}+Rxu+\frac{N}{2}u^{2}+Px+Qu\right)\mathcal{N}\left( u\
\left \vert \mu^* (x),(\sigma^* (x))^{2}\right. \right) du\\
&\\
&\;+\lambda \ln \left(
\sqrt{2\pi e}{\sigma ^{\ast }}(x)\right)+v^{\prime }(x)\int_{\mathbb{R}}(Ax+Bu)\mathcal{N}\left( u\  \left \vert
\mu^* (x),(\sigma^* (x))^{2}\right. \right) du\\
&\\
&\;+\frac{1}{2}v^{\prime \prime }(x)\int_{\mathbb{R}}(Cx+Du)^{2}\mathcal{N}%
\left( u \  \left \vert \mu^* (x),(\sigma^* (x))^{2}\right. \right) du\\
=&-\frac{M}{2}x^2-\frac{N}{2}\left(\left(\frac{CDxv''(x)+Bv'(x)-Rx-Q}{N-D^2v''(x)}\right)^2+\frac{\lambda}{N-D^2v''(x)}\right)\\
&\; -(Rx+Q)\frac{CDxv''(x)+Bv'(x)-Rx-Q}{N-D^2v''(x)}-Px+\lambda\ln\sqrt{\frac{2\pi e\lambda}{N-D^2v''(x)}}\\
&\; +Axv'(x)+Bv'(x)\frac{CDxv''(x)+Bv'(x)-Rx-Q}{N-D^2v''(x)}+\frac{1}{2}C^2x^2v''(x)\\
&\; +\frac{1}{2}D^2\left(\left(\frac{CDxv''(x)+Bv'(x)-Rx-Q}{N-D^2v''(x)}\right)^2+\frac{\lambda}{N-D^2v''(x)}\right)v''(x)\\
&\; +CDxv''(x)\frac{CDxv''(x)+Bv'(x)-Rx-Q}{N-D^2v''(x)}.
\end{array}
\]%
Reorganizing, thus, the above reduces to
\begin{equation}\label{HJB_second_case}
\begin{array}{rl}
\rho v(x)=&\frac{\left(CDxv''(x)+Bv'(x)-Rx-Q\right)^2}{2(N-D^2v''(x))}+\frac{\lambda}{2}\left(\ln\left(\frac{2\pi e \lambda}{N-D^2v''(x)}\right)-1\right)\\
&\\
&\;\;+\frac{1}{2}(C^2v''(x)-M)x^2+(Av'(x)-P)x.
\end{array}
\end{equation}%
Under Assumption \ref{Assumption} and the additional condition $R^2<MN$ (which holds automatically if
$R=0$, $M>0$ and $N>0$, a standard case in the classical LQ problems), one smooth solution to the HJB equation (\ref%
{HJB_second_case}) is given by
$$v(x)=\frac{1}{2}k_{2}x^{2}+k_{1}x+k_0,$$
where\footnote{
In general, there are multiple solutions to (\ref{HJB_second_case}). Indeed, applying, for example, a generic quadratic
function ansatz $v(x)=\frac{1}{2}a_{2}x^{2}+a_{1}x+a_{0}$, $x\in \mathbb{R}$,
in (\ref{HJB_second_case}) yields the system of algebraic equations
\begin{align}
\begin{split}\label{anzats_a_2}
    \rho a_2={}& \frac{\left(a_2(B+CD)-R\right)^2}{N-a_2D^2}+a_2(2A+C^2)-M,
\end{split}\\
\begin{split}\label{ansatz_a_1}
    \rho a_1 ={}& \frac{(a_1B-Q)(a_2(B+CD)-R)}{N-a_2D^2}+a_1A-P,
\end{split}\\
    \rho a_0 ={}& \frac{(a_1B-Q)^2}{2(N-a_2D^2)}+\frac{\lambda}{2}\left(\ln\left(\frac{2\pi e \lambda}{N-a_2D^2}\right)-1\right).\label{ansatz_a_0}
\end{align}
This system has two sets of solutions (as the quadratic equation
(\ref{anzats_a_2}) has, in general, two roots), leading to two quadratic
solutions to the HJB equation (\ref{HJB_second_case}). The one given through (\ref{a_2})--(\ref{a_0}) is one of the two solutions.}
\begin{equation} \label{a_2}
\begin{array}{rl}
& \qquad \qquad \qquad k_{2}: =\frac{1}{2}\frac{(\rho-(2A+C^2)) N+2(B+CD)R-D^2M}{(B+CD)^{2} +(\rho-(2A+C^{2}))D^{2}}\\
&\\
&\mkern-18mu -\frac{1}{2}\frac{\sqrt{((\rho-(2A+C^2)) N+2(B+CD)R-D^2M)^{2}-4\left(
(B+CD)^{2}+(\rho -(2A+C^{2}))D^{2}\right)(R^2-MN) }}{(B+CD)^{2}+(\rho
-(2A+C^{2}))D^{2}},
\end{array}
\end{equation}%
\begin{equation}\label{a_1}
k_1:=\frac{P(N-k_2D^2)-QR}{k_2B(B+CD)+(A-\rho)(N-k_2D^2)-BR},
\end{equation}
and
\begin{equation}
k_{0}:=\frac{(k_1B-Q)^2}{2\rho (N-k_2D^2)}+\frac{\lambda }{2\rho }\left( \ln \left( \frac{2\pi e\lambda }{%
N-k_{2}D^{2}}\right) -1\right).  \label{a_0}
\end{equation}%

\bigskip

For this particular solution,  given by $v(x)$ above, we
can verify that  $k_{2}<0$, due to Assumption \ref{Assumption} and $R^2<MN$. Hence, $v$ is concave, a property that is essential
in proving that it is actually the value function.\footnote{Under Assumption \ref{Assumption} and $R^2<MN$, the HJB equation has an additional quadratic solution, which however is {\it convex}.}  On the other hand, $N-D^{2}v^{\prime \prime }(x)=N-k_2D^2>0$, ensuring that $k_0$ is well defined.

\smallskip

Next, we state one of the main results of this paper.

\begin{theorem}\label{Theorem_general}
Suppose 
the reward function is given by
$$r(x,u)=-\left( \frac{M}{2}x^{2}+Rxu+\frac{N}{2}u^{2}+Px+Qu\right),$$
with $M\geq 0$, $N>0$, $R,Q,P\in \mathbb{R}$ and $R^2<MN$. Furthermore, suppose that Assumption \ref{Assumption} holds.
Then, the value function in (\ref{LQ-V}) is given by
\begin{equation}
V(x)=\frac{1}{2}k_{2}x^{2}+k_{1}x+k_0,\;\;x\in \mathbb{R},  \label{V_2nd_case}
\end{equation}%
where $k_{2}$, $k_{1}$ and $k_0$ are as in (\ref{a_2}), (\ref{a_1}) and (\ref%
{a_0}), respectively.
Moreover,
the optimal feedback control is Gaussian, with its density function given by
\begin{equation}
\boldsymbol{\pi}^{\ast }(u;x)=\mathcal{N}\left( u\  \left \vert \frac{(k_{2}(B+CD)-R)x+k_1B-Q}{%
N-k_{2}D^{2}}\ ,\  \frac{\lambda }{N-k_{2}D^{2}}\right. \right).
\label{Gaussian_verified}
\end{equation}%
Finally, the associated optimal state process $\{X^*_t, t\geq 0\}$ under $\boldsymbol{\pi}^{\ast }(\cdot;\cdot)$
is the unique solution of the SDE
{\small $$dX_{t}^{\ast }=\left(\left( A+\frac{B(k_{2}(B+CD)-R)}{N-k_{2}D^{2}}\right) X_{t}^{\ast}+\frac{B(k_1B-Q)}{N-k_2D^2}\right)\, dt$$
\begin{equation}
+\sqrt{\left(\left( C+\frac{D(k_{2}(B+CD)-R)}{N-k_{2}D^{2}}\right) X_{t}^{\ast
}+\frac{D(k_1B-Q)}{N-k_2D^2}\right)^2+\frac{\lambda D^{2}}{N-k_{2}D^{2}}}\, dW_{t},\;X_{0}^{\ast }=x.
\label{second_state_dynamics}
\end{equation}}%

%
%
%
\end{theorem}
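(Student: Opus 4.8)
The plan is to run a verification argument exactly parallel to the proof of Theorem~\ref{Prop_1}, but now with a genuinely quadratic candidate value function and a non-degenerate state process. First I would record the three structural facts that make everything work: under Assumption~\ref{Assumption} together with $R^2<MN$, the quadratic $v(x)=\tfrac12 k_2x^2+k_1x+k_0$ with coefficients (\ref{a_2})--(\ref{a_0}) is a well-defined smooth solution of the HJB equation (\ref{HJB_second_case}), with $k_2<0$ (so that $v$ is concave, hence bounded above) and $N-k_2D^2>0$ (so that the Gaussian in (\ref{Gaussian_verified}) and the constant $k_0$ are well defined). The first of these requires checking that the discriminant appearing in (\ref{a_2}) is nonnegative and that the chosen (smaller) root of the quadratic (\ref{anzats_a_2}) is negative; this sign analysis is where $R^2<MN$ enters. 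That $\boldsymbol{\pi}^*$ in (\ref{Gaussian_verified}) is precisely the pointwise maximizer of the right-hand side of (\ref{verif}) with this $v$ substituted was already obtained in deriving (\ref{pi_new_add_2}), so $v$ solves (\ref{HJB}) in the strong sense.

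For the inequality $v\geq V$, I fix $\pi\in\mathcal{A}(x)$ and apply It\^o's formula to $e^{-\rho t}v(X_t^\pi)$ along (\ref{LQ_dynamics}). Since $v$ solves (\ref{HJB}) as a maximum over $\mathcal{P}(\mathbb{R})$, the drift is dominated pointwise by $-L(X_t^\pi,\pi_t)$, and after arguing that the stochastic integral is a true martingale (localizing if needed, using condition (iii)) one obtains
$$ v(x)\geq \mathbb{E}\left[\int_0^T e^{-\rho t}L(X_t^\pi,\pi_t)\,dt\right]+\mathbb{E}\left[e^{-\rho T}v(X_T^\pi)\right]. $$
Letting $T\to\infty$, condition (v) forces the integral term to converge to the objective in (\ref{LQ-V}), while the boundary term is controlled by condition (iv): because $v$ is quadratic with $k_2<0$, one bounds $\bigl|\mathbb{E}[e^{-\rho T}v(X_T^\pi)]\bigr|$ by $e^{-\rho T}\mathbb{E}[(X_T^\pi)^2]$ up to lower-order terms handled by Cauchy--Schwarz, so $\liminf_{T\to\infty}\mathbb{E}[e^{-\rho T}v(X_T^\pi)]=0$ along the subsequence supplied by (iv). Taking the supremum over $\pi$ yields $v(x)\geq V(x)$.

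For the reverse inequality and optimality I verify that the open-loop control $\pi^*$ generated by $\boldsymbol{\pi}^*$ is admissible. The closed-loop SDE (\ref{second_state_dynamics}) has drift linear in $X$ and diffusion of the form $\sqrt{(\alpha X+\beta)^2+\gamma}$, which is globally Lipschitz with linear growth, hence it has a unique strong solution and (iii) holds; (v) then follows since the running term $L$ is quadratic in $(X_t^*,\mu_t^*,\sigma_t^*)$ with $\mu_t^*$ affine in $X_t^*$ and $(\sigma_t^*)^2$ constant. Because $\boldsymbol{\pi}^*$ attains the maximum in (\ref{HJB}) at every $x$, the displayed inequality becomes an equality for $\pi^*$; passing to the limit gives $v(x)=V(x)$ and exhibits $\pi^*$ as optimal, while (\ref{second_state_dynamics}) is just (\ref{LQ_dynamics}) with the mean and variance read off from (\ref{Gaussian_verified}).

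The main obstacle is the transversality/admissibility step, namely verifying condition (iv) for the optimal process $X^*$. Applying It\^o to $(X_t^*)^2$ shows that $\tfrac{d}{dt}\mathbb{E}[(X_t^*)^2]$ grows at exponential rate $2a+c^2$, where $a=A+\tfrac{B(k_2(B+CD)-R)}{N-k_2D^2}$ and $c=C+\tfrac{D(k_2(B+CD)-R)}{N-k_2D^2}$ are the closed-loop drift and diffusion slopes. One must establish $2a+c^2<\rho$, so that $e^{-\rho T}\mathbb{E}[(X_T^*)^2]\to 0$; after substituting $k_2$ via the Riccati relation (\ref{anzats_a_2}) this is exactly what Assumption~\ref{Assumption} guarantees, and the fact that the chosen root of (\ref{anzats_a_2}) makes the closed loop \emph{stabilizing} in this discounted sense is the delicate point that the assumption is designed to secure.
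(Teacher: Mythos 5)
Your proposal is correct and follows essentially the same route as the paper's proof in Appendix B: a verification argument via It\^o's formula with localization, using the concavity of $v$ (from $k_2<0$) and admissibility condition (iv) to kill the boundary term, and then checking admissibility of the candidate optimal control by showing $2\tilde{A}+\tilde{C}_1^2<\rho$ through the Riccati relation (\ref{anzats_a_2}) and Assumption \ref{Assumption} — exactly the computation in (\ref{decay_condition}). You also correctly identify the transversality check for $X^*$ as the delicate step, which is where the paper spends most of its effort.
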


A proof of this theorem follows essentially the same idea as that of Theorem \ref{Prop_1}, but it is  more technically involved, mainly for
verifying the admissibility of the candidate optimal control. To ease the presentation, we defer it
to Appendix B.

\begin{remark}
As in the state-independent case (see Appendix A), the solution to the SDE (\ref{second_state_dynamics}) can be expressed through the Doss-Saussman transformation if $D\neq 0$.

Specifically,
if $C+\frac{D(k_{2}(B+CD)-R)}{N-k_{2}D^{2}}\neq 0$, then
\[
X_{t}^{\ast }=F(W_t,Y_t) ,\quad t\geq 0,
\]
where the function $F$ is given by
$$F(z,y)=\frac{\sqrt{\tilde{D}}}{|\tilde{C_1}|}\sinh\left(|\tilde{C_1}|z+\sinh^{(-1)}
\left(\frac{|\tilde{C_1}|}{\sqrt{\tilde{D}}}\left(y+\frac{\tilde{C_2}}{\tilde{C_1}}\right)\right)\right)
-\frac{\tilde{C_2}}{\tilde{C_1}},$$
and the process $Y_{t}$, $t\geq 0$, is the unique pathwise solution to the random
ODE
\begin{equation*}
\frac{dY_{t}}{dt}=\frac{\tilde{A}F(W_t,Y_t)+\tilde{B}-\frac{\tilde{C_1}}{2}\left(\tilde{C_1} F(W_t,Y_t)+\tilde{C_2}\right)}{\frac{%
\partial }{\partial y}F(z,y)\big|_{z=W_t, y=Y_t}},\;Y_{0}=x,
\end{equation*}%
with $\tilde{A}:=A+\frac{B(k_{2}(B+CD)-R)}{N-k_{2}D^{2}}$, $\tilde{%
B}:=\frac{B(k_1B-Q)}{N-k_2D^2}$, $\tilde{C_1}:=C+\frac{D(k_{2}(B+CD)-R)}{N-k_{2}D^{2}}$, \\
\vspace{-5pt}
\begin{flushleft}
$\tilde{C_2}:=\frac{D(k_1B-Q)}{N-k_2D^2}$ and $\tilde{D}:=\frac{\lambda D^{2}}{N-k_{2}D^{2}}$.
\end{flushleft}

If $C+\frac{D(k_{2}(B+CD)-R)}{N-k_{2}D^{2}}= 0$ and $\tilde{A}\neq0$, then it follows from direct computation that
\[
X_{t}^{\ast }=xe^{\tilde{A}t}-\frac{\tilde{B}}{\tilde{A}}(1-e^{\tilde{A}t})+\sqrt{\tilde{C_1}^2+\tilde{D}}\int_{0}^{t}e^{\tilde{A}%
(t-s)}dW_{s},\quad t\geq 0.
\]
We leave the detailed derivations to the interested readers.
\end{remark}

The above results demonstrate that, for the general state and control dependent reward case, the optimal actions over $\mathbb{R}$ also depend on the current state $x$,
which are selected according to a state-dependent Gaussian distribution (\ref%
{Gaussian_verified}) with a state-independent variance $\frac{\lambda }{N-k_{2}D^{2}}$. Note that if $D\neq0$, then $\frac{\lambda }{N-k_{2}D^{2}}<\frac{\lambda }{N}$ (since $k_2<0$). Therefore, the exploration variance in the general state-dependent case is {\it strictly smaller} than $\frac{\lambda}{N}$, the one in the state-independent case. Recall that
$D$ is the coefficient of the control in the diffusion term of the state dynamics,
generally representing the level of randomness of the environment.\footnote{For example, in the Black--Scholes market, $D$ is the volatility parameter of the underlying stock.} Therefore, volatility impacting actions
reduce the need for exploration.
Moreover,  the greater $D$ is, the smaller the exploration variance becomes, indicating that even less exploration is required.
As a result, the need for exploration is further reduced if an action has a greater impact on the volatility of
the system dynamics. This hints that a more volatile environment renders more
learning opportunities.

On the other hand, the mean of the Gaussian distribution does not explicitly
depend on $\lambda $. The implication is that the agent should concentrate on
the most promising region in the action space while randomly selecting actions
to interact with the unknown environment. It is intriguing that the entropy-regularized
RL formulation separates the exploitation from exploration, respectively through the
mean and variance of the resulting optimal Gaussian distribution.

\begin{remark}
It should be noted that it is the optimal {\bf feedback} control distribution, {\it not} the open-loop control generated from the feedback control, that has
the Gaussian distribution. More precisely, $\boldsymbol{\pi}^*(\cdot;x)$ defined by (\ref{Gaussian_verified}) is Gaussian for each and every $x$, but the measure-valued process with the density function
\begin{equation}
\pi^{\ast }_t(u):=\mathcal{N}\left( u\  \left \vert \frac{(k_{2}(B+CD)-R)X^{\ast }_t+k_1B-Q}{%
N-k_{2}D^{2}}\ ,\  \frac{\lambda }{N-k_{2}D^{2}}\right. \right) ,\;\;t\geq 0,
\label{Gaussian_verified-ol}
\end{equation}
where $\{X^{\ast }_t,t\geq0\}$ is the solution of the exploratory dynamics under the
feedback control $\boldsymbol{\pi}^*(\cdot;\cdot)$ with any fixed initial state, say, $X^{\ast }_0=x_0$, is
in general {\it not} Gaussian for any $t>0$. The reason is that for each $t>0$, the right hand side
of (\ref{Gaussian_verified-ol}) is a composition of the Gaussian density function and
a random variable $X^{\ast }_t$ whose distribution is unknown. We stress that the Gaussian property for feedback control is more important and relevant in the RL context, as it stipulates that at any given state, if one undertakes  exploration then she should follow Gaussian. The open-loop control $\{\pi^{\ast }_t,t\geq0\}$, generated from the Gaussian feedback control, 
is just what the agent would end up if she follows Gaussian exploration at every state.
\end{remark}

Finally, as noted earlier (see Remark \ref{Gaussian_approximate}), the optimality of the Gaussian distribution is
 still valid for problems with dynamics
$$dx_t=\left(A(x_t)+B(x_t)u_t\right)\, dt+\left(C(x_t)+D(x_t)u_t\right)\, dW_t,$$
and reward function in the form $r(x,u)=r_2(x)u^2+r_1(x)u+r_0(x)$, where the functions $A,B,C,D,r_2, r_1$ and $r_0$ are possibly nonlinear (pending some additional assumptions for the verification arguments to hold).

\section{The Cost and Effect of Exploration}

Motivated by the necessity of
exploration facing the typically unknown environment in an RL setting, we have formulated and analyzed a new class of stochastic control
problems that combine entropy-regularized criteria and relaxed controls. We
have also derived closed-form solutions and presented verification results for
the important class of LQ problems.
A natural question arises, namely, how to quantify the cost and effect of the exploration. This can be done by comparing our results to the ones for the
classical stochastic LQ problems, which have neither entropy regularization
nor control relaxation.

We carry out this comparison  analysis next.

\subsection{The classical LQ problem}

We first briefly recall the classical stochastic LQ control problem in an infinite horizon with
discounted reward. Let $\{{W}_{t}$, $t\geq 0\}$ be a standard Brownian
motion defined on the filtered probability space $({\Omega},{%
\mathcal{F}},\{ {\mathcal{F}}_{t}\}_{t\geq 0},{%
\mathbb{P}})$ that satisfies the usual conditions. The controlled state process $%
\{x^u_{t},t\geq 0\}$ solves
\begin{equation}\label{classical_LQ_dynamics}
dx^u_{t}=\left( Ax^u_{t}+Bu_{t}\right) \,dt+\left( Cx^u_{t}+Du_{t}\right) d{W}%
_{t}\,,\quad t\geq 0,\;\;x^u_{0}=x,
\end{equation}%
with given constants $A,B,C$ and $D,$ and the process $\{u_{t}$, $%
t\geq 0\}$ being a (classical, non-relaxed) control.

The value function is defined as in (\ref{classical}),
\begin{equation}\label{LQ_classical_V}
V^{\text{cl}}(x):=\sup_{u\in \mathcal{A}^{\text{cl}}(x)}\mathbb{E}\left[ \left.
\int_{0}^{\infty }e^{-\rho t}r(x^u_{t},u_{t})\ dt\right \vert \ x^u_{0}=x\right],
\end{equation}%
for $x\in \mathbb{R}$, where the reward function $r(\cdot ,\cdot )$ is given
by (\ref{LQ_general_reward}). Here,
the admissible set $\mathcal{A}^{\text{cl}}(x)$ is defined as follows: $u\in \mathcal{A}^{\text{cl}}(x)$ if

\medskip

(i) $\{u_t, t\geq 0\}$ is $\mathcal{F}_t$-progressively measurable;

(ii) for each $t\geq 0$, $\mathbb{E}\left[\int_0^t(u_s)^2\, ds\right]<\infty$;

(iii) with $\{x^u_t, t\geq 0\}$ solving (\ref{classical_LQ_dynamics}), $\liminf_{T\rightarrow \infty} e^{-\rho T}\mathbb{E}\big[(x^u_T)^2\big]=0$;

(iv) with $\{x^u_t, t\geq 0\}$ solving (\ref{classical_LQ_dynamics}), $\mathbb{E}\big[\int_0^\infty e^{-\rho t}|r(x^u_t,u_t)|\, dt\big]<\infty.$

\medskip

The associated HJB equation is
\begin{equation*}
\rho {w}(x)=\max_{u\in \mathbb{R}}\left( r(x,u)+\frac{1}{2}(Cx+Du)^{2}{w}%
^{\prime \prime }(x)+{(Ax+Bu)w}^{\prime }(x)\right)
\end{equation*}%
\begin{equation*}
=\max_{u\in \mathbb{R}}\left( -\frac{1}{2}\left( N-D^{2}{w}^{\prime \prime
}(x)\right) u^{2}+\left( {CDxw}^{\prime \prime }(x)+B{w}^{\prime
}(x)-Rx-Q\right) u\right)
\end{equation*}%
\begin{equation*}
+\frac{1}{2}(C^{2}{w}^{\prime \prime}(x)-M)x^2 +(A{w}^{\prime }(x)-P)x
\end{equation*}%
\begin{equation}
=\frac{\left( CDx{w}^{\prime \prime }(x)+B{w}^{\prime }(x)-Rx-Q\right) ^{2}}{%
2(N-D^{2}{w}^{\prime \prime }(x))}+\frac{1}{2}({C^{2}w}^{\prime \prime}(x)-M)x^2+(A{%
w}^{\prime }(x)-P)x,  \label{classical_HJB}
\end{equation}%
with the maximizer being, provided that $%
N-D^{2}{w}^{\prime \prime }(x)>0$,
\begin{equation}\label{classical_optimal_feedback}
\boldsymbol{u}^{\ast }(x)=\frac{CDxw^{\prime \prime }(x)+Bw^{\prime }(x)-Rx-Q}{%
N-D^{2}w^{\prime \prime }(x)},\;\;x\in \mathbb{R}.
\end{equation}%
The standard verification argument then deduces that $\boldsymbol{u}$ is the optimal feedback control. 

In the next section, we will establish a solvability equivalence between the entropy-regularized relaxed LQ problem and the classical one.

\subsection{Solvability equivalence of classical and exploratory \\ problems}


Given a reward function $r(\cdot ,\cdot )$ and a classical controlled
process (\ref{classical_state}), the relaxed formulation (\ref{new_dynamics}%
) under the entropy-regularized  objective is, naturally, a technically more challenging problem, compared to its classical counterpart.

In this section, we show that there is actually a solvability equivalence between the
exploratory and the classical  stochastic LQ problems, in the sense that the value function and optimal control of one problem lead
directly to those of the other.
Such equivalence enables us to readily establish the convergence result as
the exploration weight $\lambda $ decays to zero. Furthermore, it makes it possible to
quantify the exploration cost, which we introduce in the sequel.

\begin{theorem}\label{Theorem_equivalence}
The following two statements (a) and (b) are
equivalent.
\begin{description}
\item[(a)] \ The function $v(x)=\frac{1}{2}\alpha _{2}x^{2}+\alpha
_{1}x+\alpha _{0}+\frac{\lambda }{2\rho }\left( \ln \left( \frac{2\pi
e\lambda }{N-\alpha _{2}D^{2}}\right) -1\right) $, $x\in \mathbb{R}$, with $\alpha_0,\alpha_1\in \mathbb{R}$ and
$\alpha _{2}<0$, is the value function of the exploratory problem (\ref{LQ-V}) and the
corresponding optimal feedback control is
\begin{equation*}
\boldsymbol{\pi}^{\ast }(u;x)=\mathcal{N}\left( u\  \left \vert \frac{(\alpha_{2}(B+CD)-R)x+\alpha_1B-Q}{%
N-\alpha_{2}D^{2}}\ ,\  \frac{\lambda }{N-\alpha_{2}D^{2}}\right. \right).
\end{equation*}%

\item[(b)] \ The function ${w}(x)=\frac{1}{2}\alpha _{2}x^{2}+\alpha
_{1}x+\alpha _{0}$, $x\in \mathbb{R}$, with $\alpha_0,\alpha_1\in \mathbb{R}$ and $\alpha _{2}<0$, is the value function of the classical problem (\ref{LQ_classical_V})
and the corresponding optimal feedback control is
\begin{equation*}
\boldsymbol{u}^{\ast }(x)=\frac{(\alpha _{2}(B+CD)-R)x+\alpha _{1}B-Q}{%
N-\alpha _{2}D^{2}}.
\end{equation*}%
\end{description}
\end{theorem}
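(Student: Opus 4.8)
The plan is to exploit that statements (a) and (b) share the same coefficients $\alpha_2,\alpha_1,\alpha_0$, so the two candidate value functions differ only by the additive constant $c:=\frac{\lambda}{2\rho}\left(\ln\left(\frac{2\pi e\lambda}{N-\alpha_2D^2}\right)-1\right)$; that is, $v(x)=w(x)+c$. Since $c$ is a constant, $v'(x)=w'(x)$ and $v''(x)=w''(x)=\alpha_2$ for every $x$, whence $N-D^2v''(x)=N-\alpha_2D^2>0$ (using $\alpha_2<0$ and $N>0$). First I would record this, and observe that the exploratory feedback mean in (\ref{pi_new_add_2}) and the classical optimal feedback in (\ref{classical_optimal_feedback}) are \emph{the same} rational expression $\frac{CDx\,w''(x)+Bw'(x)-Rx-Q}{N-D^2w''(x)}$; evaluating at $w''=\alpha_2$, $w'(x)=\alpha_2x+\alpha_1$ gives $\frac{(\alpha_2(B+CD)-R)x+\alpha_1B-Q}{N-\alpha_2D^2}$ in both cases. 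Thus the classical optimal control coincides with the mean of the Gaussian in (a), and the Gaussian variance $\frac{\lambda}{N-\alpha_2D^2}$ is well defined.

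The core algebraic step is to show the shift $v\mapsto w=v-c$ carries a solution of one HJB equation to a solution of the other. Comparing the exploratory HJB (\ref{HJB_second_case}) with the classical HJB (\ref{classical_HJB}), the sole difference is the entropy term $\frac{\lambda}{2}\left(\ln\left(\frac{2\pi e\lambda}{N-D^2v''(x)}\right)-1\right)$. Because $v''\equiv\alpha_2$ is constant, this term equals the \emph{constant} $\rho c$. Hence, using $v'=w'$ and $v''=w''$, the left-hand side of (\ref{HJB_second_case}) exceeds $\rho w(x)$ by $\rho c$ while its right-hand side exceeds the right-hand side of (\ref{classical_HJB}) by the same $\rho c$; the two HJB identities are therefore equivalent. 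Equivalently, plugging the quadratic ansatz into either equation yields the identical pair of algebraic relations (\ref{anzats_a_2})--(\ref{ansatz_a_1}) for $\alpha_2$ and $\alpha_1$, while the two constant-term equations differ exactly by the entropy contribution, matching the prescribed form of $c$. I would also note that the quadratic equation for $\alpha_2$ has two roots (one concave, one convex), and the shared requirement $\alpha_2<0$ selects the concave branch on both sides, so the correspondence matches concave solution to concave solution.

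With this correspondence, I would upgrade from HJB solutions to value functions via the verification theorems on each side. For (b)$\Rightarrow$(a): if $w$ is the classical value function it solves (\ref{classical_HJB}), so $v=w+c$ solves (\ref{HJB_second_case}), is concave, and satisfies $N-\alpha_2D^2>0$; I then run the verification argument of Theorem \ref{Theorem_general} (as in Appendix B) to conclude that $v$ is the exploratory value function and $\boldsymbol{\pi}^*$ is optimal. For (a)$\Rightarrow$(b): if $v$ is the exploratory value function then $w=v-c$ solves (\ref{classical_HJB}) and is concave, and I invoke the standard classical LQ verification (with maximizer (\ref{classical_optimal_feedback})) to identify $w$ as the classical value function with the stated deterministic optimal control.

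The hard part is not the HJB algebra --- which is immediate once one notices the entropy term is a constant for quadratic value functions --- but the two verification steps, and especially the admissibility of the Gaussian feedback (conditions (iii)--(v) defining $\mathcal{A}(x)$ in Section~4), which relies on the integrability, growth, and transversality estimates already established in Appendix~B. The one subtlety I would handle with care is that Theorem \ref{Theorem_general} verifies the \emph{specific} solution $(k_2,k_1,k_0)$, whereas here I need the verification for an \emph{arbitrary} concave quadratic solution $(\alpha_2,\alpha_1,\alpha_0)$; I would check that its proof uses only $\alpha_2<0$ and $N-\alpha_2D^2>0$ (guaranteeing concavity, a well-posed Gaussian, and the required estimates) rather than the explicit formula for $k_2$, so that it applies verbatim to any such solution.
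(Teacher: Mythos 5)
Your HJB-level correspondence is exactly the paper's first step: for a quadratic candidate with $v''\equiv\alpha_2$ the entropy term in (\ref{HJB_second_case}) is the constant $\rho c$ with $c=\frac{\lambda}{2\rho}\left(\ln\left(\frac{2\pi e\lambda}{N-\alpha_2D^2}\right)-1\right)$, so $v$ solves (\ref{HJB_second_case}) iff $w=v-c$ solves (\ref{classical_HJB}), and the two feedback maximizers share the same mean expression. That part is fine. The gap is in how you propose to handle admissibility of the candidate optimal controls. You plan to ``run the verification argument of Theorem \ref{Theorem_general} (as in Appendix B)'' and assert that its estimates use only $\alpha_2<0$ and $N-\alpha_2D^2>0$. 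That check would come back negative: the transversality condition (\ref{desired}) and the bound $\mathbb{E}\left[\int_0^\infty e^{-\rho t}(X_t^*)^2dt\right]<\infty$ in Appendix B are obtained from (\ref{decay_condition}) by invoking Assumption \ref{Assumption} (the lower bound on $\rho$), which is \emph{not} a hypothesis of Theorem \ref{Theorem_equivalence}. Without it, $2\tilde A+\tilde C_1^2-\rho$ need not be negative, and your verification step for $\pi^*$ has no support. The same problem appears in the direction (a)$\Rightarrow$(b), where you need admissibility of $u^*$ for the classical problem and Appendix B says nothing at all.

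The paper closes this gap differently, and this is the real content of its proof: it uses the full strength of the hypothesis that the \emph{other} problem is solved. If (b) holds, then $u^*$ is optimal, hence admissible, so $\liminf_{T\to\infty}e^{-\rho T}\mathbb{E}[(x_T^*)^2]=0$ and the discounted reward is integrable; Lemma \ref{Lemma_appendix} then transfers these properties to $X^*$ under $\pi^*$ (and conversely). The mechanism is an explicit computation: both optimal state processes have drift $A_1x+A_2$, and their squared diffusions differ only by the additive constant $C_1=\frac{\lambda D^2}{N-\alpha_2D^2}$, so the first and second moments solve linear ODEs with identical homogeneous parts ($e^{A_1t}$ and $e^{(2A_1+B_1^2)t}$) and differ only in constants; hence the transversality and discounted-integrability conditions hold for one process iff they hold for the other. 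Note also that you cannot weaken hypothesis (b) to ``$w$ solves the classical HJB'': the HJB equation has two quadratic solutions in general, and being a (even concave) solution does not by itself make a function the value function; the admissibility/transversality information carried by the optimality assertion in (b) is what must be transported to the exploratory side, and your proposal never transports it.
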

\begin{proof}
See Appendix C.
\end{proof}

\bigskip

The above equivalence between statements (a) and (b) yields that if one problem is solvable, so is the other; and
conversely, if one is not solvable, neither is the other.

\subsection{Cost of exploration}

We define the exploration
cost for a general RL problem to be the difference
between the discounted accumulated rewards following the corresponding optimal  {\it open-loop} 
controls under the classical objective (\ref{classical}) and the exploratory
objective (\ref{entropy_goal}), net of the value of the entropy.  Note that the
solvability equivalence established in the previous subsection  is important for this definition, not least because the cost is well defined only if both the classical and
the exploratory problems are solvable.

Specifically, let the classical maximization problem (\ref{classical})
with the state dynamics (\ref{classical_state}) have the value function $V^{\text{cl}}(\cdot)$ and optimal strategy $\{u_{t}^{\ast },t\geq 0\}$,
and  the corresponding exploratory  problem have the value function $V(\cdot
)$ and optimal control distribution $\{\pi _{t}^{\ast },%
t\geq 0\}$.
Then, we define the \textit{exploration cost} as
\begin{equation}
\mathcal{C}^{u^{\ast },\pi ^{\ast }}(x):=V^{\text{cl}}(x)-\left( V(x)+{\lambda}\mathbb{E}\left[
\left. \int_{0}^{\infty }e^{-\rho t}\left( \int_{{U}}\pi _{t}^{\ast }(u)\ln
\pi _{t}^{\ast }(u)du\right) \,dt\right \vert X_{0}^{\pi^*}=x\right] \right) ,
\label{exploration_cost}
\end{equation}%
for $x\in \mathbb{R}$.

The term in the parenthesis represents the total discounted rewards incurred
by $\pi ^{\ast }$ after taking out the contribution of the entropy term to the value
function $V(\cdot )$ of the exploratory problem. The exploration cost hence
measures the best outcome due to the explicit inclusion
of exploratory strategies in the entropy-regularized  objective,
relative to the benchmark $V^{\text{cl}}(\cdot)$ which is the best possible objective
value should the  model
be {\it a priori} fully known.

We next compute the exploration cost for the LQ case.
As we show, this cost is surprisingly simple: it  depends only on two ``agent-specific" parameters: the temperature
parameter $\lambda $ and the discounting parameter $\rho $.

\begin{theorem}\label{exploration_cost_theorem}
Assume that statement (a) (or equivalently, (b)) of Theorem \ref{Theorem_equivalence} holds. Then, the exploration cost for the stochastic LQ problem is
\begin{equation}\label{LQ_cost_theorem}
\mathcal{C}^{u^{\ast },\pi ^{\ast }}(x)
=\frac{%
\lambda }{2\rho },\;\;\mbox{for $x\in \mathbb{R}$}.
\end{equation}%
\end{theorem}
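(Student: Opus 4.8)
The plan is to feed the two explicit value functions supplied by the solvability equivalence of Theorem \ref{Theorem_equivalence} into the definition (\ref{exploration_cost}) of the exploration cost, and to show that the only surviving contribution comes from the additive constant in $V$. Assuming statement (a) (hence (b)) of Theorem \ref{Theorem_equivalence} holds, I would record $V(x)=\frac{1}{2}\alpha_2x^2+\alpha_1x+\alpha_0+\frac{\lambda}{2\rho}\big(\ln(\frac{2\pi e\lambda}{N-\alpha_2D^2})-1\big)$ together with $V^{\text{cl}}(x)=\frac{1}{2}\alpha_2x^2+\alpha_1x+\alpha_0$, so that
\[
V^{\text{cl}}(x)-V(x)=-\frac{\lambda}{2\rho}\left(\ln\left(\frac{2\pi e\lambda}{N-\alpha_2D^2}\right)-1\right).
\]
Notice that the state-dependent quadratic and linear terms cancel, a first sign that the cost will be constant in $x$.

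The decisive step is to evaluate the entropy integral appearing in (\ref{exploration_cost}). Here I would exploit that the optimal open-loop distribution $\pi_t^*$ is Gaussian with a variance $\frac{\lambda}{N-\alpha_2D^2}$ that is \emph{independent of the state} (and hence deterministic and constant in $t$), even though its mean is a random function of $X_t^*$. Since the differential entropy of a Gaussian depends only on its variance, $\mathcal{H}(\pi_t^*)=\frac{1}{2}\ln\big(\frac{2\pi e\lambda}{N-\alpha_2D^2}\big)$ is a deterministic constant, whence
\[
\int_{\mathbb{R}}\pi_t^*(u)\ln\pi_t^*(u)\,du=-\frac{1}{2}\ln\left(\frac{2\pi e\lambda}{N-\alpha_2D^2}\right),\quad t\geq 0.
\]
The expectation over the (unknown) law of $X_t^*$ is then trivial, and $\int_0^\infty e^{-\rho t}\,dt=\frac{1}{\rho}$ gives
\[
\lambda\,\mathbb{E}\!\left[\int_0^\infty e^{-\rho t}\!\int_{\mathbb{R}}\pi_t^*(u)\ln\pi_t^*(u)\,du\,dt\right]=-\frac{\lambda}{2\rho}\ln\left(\frac{2\pi e\lambda}{N-\alpha_2D^2}\right).
\]

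Combining the two displays inside (\ref{exploration_cost}), the logarithmic terms cancel exactly and only the $-1$ inside the parenthesis survives, yielding $\mathcal{C}^{u^*,\pi^*}(x)=\frac{\lambda}{2\rho}$ for every $x\in\mathbb{R}$. The one place demanding care --- the \textbf{main obstacle}, though it is more a conceptual point than a technical one --- is the handling of the entropy expectation: a priori it looks as if one must know the marginal law of the optimal state process $X_t^*$, which is intractable. The resolution is precisely the separation of exploitation and exploration established earlier in the LQ analysis: the variance, and therefore the entropy, carries no state dependence, so the expectation collapses to a constant and the law of $X_t^*$ never enters. I would also note that the admissibility of $\pi^*$, guaranteed by the standing solvability hypothesis, is what legitimizes interchanging expectation and integration and secures finiteness throughout.
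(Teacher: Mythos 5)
Your proposal is correct and follows essentially the same route as the paper's proof: compute the (deterministic, state-independent) negative entropy $\int_{\mathbb{R}}\pi_t^*(u)\ln\pi_t^*(u)\,du=-\tfrac{1}{2}\ln\bigl(\tfrac{2\pi e\lambda}{N-\alpha_2D^2}\bigr)$ using that the optimal Gaussian's variance does not depend on the state, then substitute this together with the explicit forms of $V$ and $V^{\text{cl}}$ from Theorem \ref{Theorem_equivalence} into (\ref{exploration_cost}) and observe the logarithmic terms cancel. Your added emphasis on why the law of $X_t^*$ never enters is a sound elaboration of what the paper leaves as ``straightforward to calculate.''
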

\begin{proof}
Let $\{\pi^{\ast }_t,t\geq0\}$ be the open-loop control generated by the feedback control $\boldsymbol{\pi}^{\ast }$ given in statement (a) with respect to the initial state $x$, namely,
\[
\pi^{\ast }_t(u)=\mathcal{N}\left( \ u\ \Big|  \frac{(\alpha_{2}(B+CD)-R)X_{t}^{\ast }+\alpha _{1}B-Q}{N-\alpha _{2}D^{2}}\ ,\  \frac{%
\lambda }{N-\alpha _{2}D^{2}}\right)
\]
where $\{X_{t}^{\ast }, t\geq 0\}$ is the
associated state process of the exploratory problem, starting from the state $x$, when $\boldsymbol{\pi}^{\ast }$ is applied. 
Then, it is straightforward to calculate 
\[ \int_{\mathbb{R}}\pi^{\ast }_t(u)\ln \pi^{\ast }_t(u)du=-\frac{1}{2}\ln \left( \frac{2\pi e\lambda }{N-\alpha _{2}D^{2}}\right).\]
The desired result now follows immediately from the general definition in (\ref{exploration_cost}) and the expressions of $V(\cdot )$ in (a) and $V^{\text{cl}}(\cdot)$ in (b).
\end{proof}

 \medskip

In other words,  the exploration cost for
stochastic LQ problems can be completely pre-determined by the learning
agent through choosing her individual parameters $\lambda $ and $\rho $, since the cost relies neither on the specific (unknown) linear state dynamics, nor on the quadratic reward structure.

Moreover, the exploration cost (\ref{LQ_cost_theorem}) depends on $\lambda $ and $\rho $ in a rather
intuitive way:  it increases as $\lambda $ increases, due to more
emphasis placed on exploration, or as $\rho $ decreases, indicating an
effectively longer horizon for exploration.\footnote{The connection between
 a discounting parameter and an effective length of time horizon is  well known in
the discrete time discounted reward formulation $\mathbb{E}[\sum_{t\geq
0}\gamma ^{t}R_{t}]$ for classical Markov Decision Processes (MDP) (see, among others,
\citet{Derman}). This infinite horizon discounted problem can be viewed as an
undiscounted, finite horizon  problem with a random termination time $T$ that
is geometrically distributed with parameter $1-\gamma $.
Hence, an effectively longer horizon with mean $\frac{1}{1-\gamma }$
is applied to the optimization problem as $\gamma $ increases. Since a
smaller $\rho $ in the continuous time objective (\ref{classical}) or (\ref%
{entropy_goal}) corresponds to a larger $\gamma $ in the discrete time
objective, we can see the similar effect of a decreasing $\rho $ on the
effective horizon of continuous time problems.}

\subsection{Vanishing exploration}


Herein, the exploration weight $\lambda$ has been taken as an exogenous parameter reflecting the level of exploration desired by the learning agent. The smaller this parameter is, the more emphasis is placed on exploitation. When this parameter is sufficiently close to zero, the exploratory formulation is sufficiently close to the  problem without exploration. Naturally,
a desirable result is that if the exploration weight $\lambda$ goes to zero, then the entropy-regularized LQ
problem would converge to its classical counterpart. The following result makes this precise.

\begin{theorem}\label{convergence_to_Dirac}
Assume that statement (a) (or equivalently, (b)) of Theorem \ref{Theorem_equivalence} holds. Then, for each $x\in \mathbb{R}$, 
$$\lim_{\lambda \rightarrow 0}\boldsymbol{\pi}^{\ast}(\cdot;x)=\delta_{\boldsymbol{u}^{\ast}(x)}(\cdot) \;\;\mbox{ weakly.}$$
Moreover, 
for each $%
x\in \mathbb{R}$,
$$\lim_{\lambda \rightarrow 0}|V(x)-V^{\text{cl}}(x)|=0.$$
\end{theorem}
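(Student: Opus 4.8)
The plan is to exploit the fully explicit formulas furnished by statement (a)/(b) of Theorem \ref{Theorem_equivalence}, which reduce both assertions to elementary asymptotics in $\lambda$. The crucial structural observation I would record first is that the coefficients $\alpha_2$ and $\alpha_1$ are fixed by the algebraic relations of the classical LQ problem and carry \emph{no} dependence on $\lambda$; the parameter $\lambda$ enters the optimal Gaussian only through its variance $\sigma_\lambda^2 := \frac{\lambda}{N-\alpha_2 D^2}$, and enters $V$ only through the additive scalar $\frac{\lambda}{2\rho}\!\left(\ln\!\frac{2\pi e\lambda}{N-\alpha_2 D^2}-1\right)$. In particular, the mean of $\boldsymbol{\pi}^*(\cdot;x)$ equals $\boldsymbol{u}^*(x)$ for \emph{every} $\lambda$, and since $\alpha_2<0$ we have $N-\alpha_2 D^2>0$, so $\sigma_\lambda^2$ is well defined and $\sigma_\lambda^2\downarrow 0$ as $\lambda\downarrow 0$.

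For the weak convergence I would fix $x$ and write $\boldsymbol{\pi}^*(\cdot;x)=\mathcal{N}(\cdot\,|\,\boldsymbol{u}^*(x),\sigma_\lambda^2)$. By definition of weak convergence it suffices to show $\int_{\mathbb{R}} f(u)\,\boldsymbol{\pi}^*(u;x)\,du\to f(\boldsymbol{u}^*(x))$ for every bounded continuous $f$. Substituting $u=\boldsymbol{u}^*(x)+\sigma_\lambda z$ turns the integral into $\int_{\mathbb{R}} f(\boldsymbol{u}^*(x)+\sigma_\lambda z)\,\mathcal{N}(z\,|\,0,1)\,dz$; since $f$ is bounded and continuous and $\sigma_\lambda\to 0$, the integrand converges pointwise to $f(\boldsymbol{u}^*(x))$ and is dominated by $\|f\|_\infty\,\mathcal{N}(\cdot\,|\,0,1)$, so dominated convergence gives the claim. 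Equivalently, one may invoke L\'evy's continuity theorem: the characteristic function $e^{\,i\theta\boldsymbol{u}^*(x)-\frac12\sigma_\lambda^2\theta^2}$ converges to $e^{\,i\theta\boldsymbol{u}^*(x)}$, the characteristic function of $\delta_{\boldsymbol{u}^*(x)}$.

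For the value functions, subtracting the expressions in (a) and (b) gives, for every $x\in\mathbb{R}$,
\[
V(x)-V^{\text{cl}}(x)=\frac{\lambda}{2\rho}\left(\ln\left(\frac{2\pi e\lambda}{N-\alpha_2 D^2}\right)-1\right),
\]
a quantity independent of $x$. Expanding the logarithm, the right-hand side is $\frac{1}{2\rho}\bigl[\lambda\ln\lambda+\lambda(\ln(2\pi e)-\ln(N-\alpha_2 D^2)-1)\bigr]$, a finite combination of terms of the form $\lambda\ln\lambda$ and $\lambda\times\text{const}$, each of which tends to $0$ as $\lambda\to 0^+$. Hence $|V(x)-V^{\text{cl}}(x)|\to 0$, in fact uniformly in $x$.

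There is no genuine obstacle once the $\lambda$-independence of $\alpha_2$, $\alpha_1$, and hence of the Gaussian mean, is isolated: both limits are then elementary. The only point requiring mild care is the weak-convergence step — ensuring the test-function (or characteristic-function) argument is carried out against the \emph{fixed} mean $\boldsymbol{u}^*(x)$ rather than a drifting one — together with the harmless check that $N-\alpha_2 D^2$ is bounded away from $0$, so that the term $\lambda\ln\!\bigl(1/(N-\alpha_2 D^2)\bigr)$ does not interfere with the $\lambda\ln\lambda\to 0$ limit.
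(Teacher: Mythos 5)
Your proposal is correct and follows essentially the same route as the paper: both rest on the observation that $\alpha_1,\alpha_2$ (hence the Gaussian mean $\boldsymbol{u}^*(x)$ and the quadratic part of $V$) are independent of $\lambda$, so that only the variance $\frac{\lambda}{N-\alpha_2D^2}\to0$ and the additive term $\frac{\lambda}{2\rho}\bigl(\ln\frac{2\pi e\lambda}{N-\alpha_2D^2}-1\bigr)\to0$ need to be tracked. You merely supply the standard test-function/characteristic-function details for the weak convergence that the paper leaves implicit.
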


\begin{proof}
The weak convergence of the feedback controls is due to the explicit forms of $\boldsymbol{\pi}^{\ast}$  and $\boldsymbol{u}^{\ast}$ in statements (a) and (b), and the fact that $\alpha_1$, $\alpha_2$ are independent of $\lambda$. The pointwise convergence of the value functions follows easily from the forms of $V(\cdot )$ and $%
V^{\text{cl}}(\cdot )$, together with the fact that
\[ \lim_{\lambda \rightarrow 0}\frac{\lambda }{2\rho }\left( \ln \left( \frac{2\pi
e\lambda }{N-\alpha _{2}D^{2}}\right) -1\right) =0.
\]
\end{proof}

\section{Conclusions}

This paper approaches RL from a stochastic control perspective. Indeed,
control and RL both deal with the problem of managing dynamic and stochastic systems
by making the best use of  available information. However, as a recent survey paper
\cite{Recht} points out, ``...{\it That the RL and
control communities remain practically disjoint has led to the co-development of vastly different
approaches to the same problems}...." It is our view that communication and exchange of ideas between the two fields are of paramount
importance to the progress of both fields, for an old idea from one field may well be
a fresh one to the other. The continuous-time relaxed stochastic control formulation employed in this paper exemplifies such a vision.

The main contributions of this paper are {\it conceptual} rather than {\it algorithmic}: casting the RL problem in a continuous-time setting and with the aid of stochastic control and stochastic calculus, we interpret and explain why the Gaussian distribution is best for exploration
in RL. This finding is independent of the specific parameters of the underlying dynamics and reward function structure, as long as the dependence on actions
is linear in the former and quadratic in the latter. The same can be said about other main results of the paper, such as the separation between exploration and exploitation
in the mean and variance of the resulting Gaussian distribution, and the cost of exploration. The explicit forms of the derived optimal Gaussian distributions do indeed depend on the
model specifications which are unknown in the RL context. With regards to implementing RL algorithms based on our results for LQ problems, we can either do it in continuous time and space directly following, for example, \cite{Doya}, or modify the problem into an MDP one by discretizing the time, and then learn the parameters of the optimal Gaussian distribution following standard RL procedures (e.g. the so-called $Q$-learning).
For that, our results may again be useful: they suggest that we only need to learn among the class of
simpler Gaussian policies, i.e., $\pi=\mathcal{N}(\cdot\, | \theta_1x+\theta_2, \phi)$ (cf. (\ref{Gaussian_verified})), rather than generic (nonlinear) parametrized Gaussian policy $\pi_{\theta,\phi}=\mathcal{N}(\cdot\, | \theta(x), \phi(x))$. We expect that this  simpler functional  form can considerably increase the learning speed.

\begin{appendix}
\section*{Appendix A: Explicit Solutions to (\ref{star_SDE_1})}
For a range of parameters, we derive explicit solutions to SDE (\ref{star_SDE_1}) satisfied by the optimal state process $\{X^*_t,t\geq 0\}$.

If $D=0$, the SDE (\ref{star_SDE_1}) reduces to
$$dX^*_t=\left(AX^*_t-\frac{BQ}{N}\right)\,dt+|C|\; |X^*_t|\, dW_t,\quad X^*_0=x.$$
If $x\geq 0$ and $BQ\leq 0$, the above equation has a nonnegative solution given by
$$X^*_t=xe^{\left(A-\frac{C^2}{2}\right)t+|C|W_t}-\frac{BQ}{N}\int_0^te^{\left(A-\frac{C^2}{2}\right)(t-s)+|C|(W_t-W_s)}ds.$$
If $x\leq 0$ and $BQ\geq 0$, it has a nonpositive solution
$$X^*_t=xe^{\left(A-\frac{C^2}{2}\right)t-|C|W_t}
-\frac{BQ}{N}\int_0^te^{\left(A-\frac{C^2}{2}\right)(t-s)-|C|(W_t-W_s)}ds.$$
These two cases cover the special case when $Q=0$ which is standard in the LQ control formulation. We are unsure if there is an explicit solution when neither of these assumptions is satisfied (e.g. when $x\geq 0$ and $BQ> 0$).

If $C=0$, the SDE (\ref{star_SDE_1}) becomes
$$dX_{t}^{\ast }=\left(AX_{t}^{\ast }-\frac{BQ}{N}\right)\, dt+%
\frac{|D|}{N}\sqrt{Q^2+\lambda N }\, dW_{t},$$ and its unique
solution is given by
\[
X_{t}^{\ast }=xe^{At}-\frac{BQ}{AN}(1-e^{At})+\frac{|D|}{N}\sqrt{Q^2+\lambda N}\int_{0}^{t}e^{A(t-s)}dW_{s},\quad t\geq 0,
\]
if $A\neq0$, and
\[ X_{t}^{\ast }=x-\frac{BQ}{N}t+\frac{|D|}{N}\sqrt{Q^2+\lambda N }W_t, \quad t\geq 0,
\]
if $A=0$.

If $C\neq 0$ and $D\neq0$, then the diffusion coefficient of SDE (\ref{star_SDE_1}) is $C^2$ in the unknown, with the first and second order derivatives being bounded. Hence, (\ref{star_SDE_1}) can be solved explicitly using the
Doss-Saussman transformation (see, for example, \cite{KS}, pp 295-297).
This transformation uses the ansatz
\begin{equation}\label{DS_appendix}
X_{t}^{\ast }(\omega )=F(W_{t}(\omega ),Y_{t}(\omega )),\quad t\geq 0,\;\omega \in \Omega
\end{equation}%
for some deterministic function $F$ and an adapted process $Y_{t}$, $t\geq 0$, solving a random ODE.
Applying It\^o's formula to (\ref{DS_appendix}) and using the dynamics in (\ref{star_SDE_1}), we deduce that $F$ solves, for each fixed $y$, the ODE
\begin{equation}
{\frac{\partial F}{\partial z}=\sqrt{\left(CF(z,y)-\frac{DQ}{N}\right)^2+\frac{\lambda D^{2}}{N}},\quad F(0,y)=y.}
\label{DS_u}
\end{equation}%
Moreover, $Y_{t}$, $t\geq 0$, is the unique pathwise solution to the random
ODE
\begin{equation}
\frac{d}{dt}Y_{t}(\omega )=G(W_{t}(\omega ),Y_{t}(\omega )),\quad
Y_{0}(\omega )=x,  \label{DS_Y}
\end{equation}%
where
$$G(z,y)=\frac{AF(z,y)-\frac{BQ}{N}-\frac{C}{2}\left(CF(z,y)-\frac{DQ}{N}\right)}{\frac{%
\partial }{\partial y}F(z,y)}.$$
It is easy to verify that both equations (\ref{DS_u}) and (\ref{DS_Y}) have a unique solution. Solving (\ref{DS_u}), we obtain
$$F(z,y)=\sqrt{\frac{\lambda}{N} }\left \vert
\frac{D}{C}\right \vert \sinh \left( |C|z+\sinh ^{(-1)}\left( {\sqrt{%
\frac{N}{\lambda} }}\left \vert \frac{C}{D}\right \vert \left(y-\frac{DQ}{CN}\right) \right)\right)+\frac{DQ}{CN}.$$
This, in
turn, leads to the explicit expression of the function $G(z,y)$.

\section*{Appendix B: Proof of Theorem 4}

Recall that the function $v$, where $v(x)=\frac{1}{2}k_{2}x^{2}+k_{1}x+k_0,\;\;x\in \mathbb{R}$,
where $k_{2}$, $k_{1}$ and $k_0$ are defined by (\ref{a_2}), (\ref{a_1}) and (\ref%
{a_0}), respectively, satisfies
the HJB equation (\ref{HJB}).

Throughout this proof we fix the initial state $x\in\mathbb{R}$. 
Let $\pi\in \mathcal{A}(x)$ and $X^{\pi}$ be the associated state process solving (\ref{LQ_dynamics}) with $\pi$ being used. Let $T>0$ be arbitrary.
Define the stopping times $\tau _{n}^{\pi }:=\{t\geq 0:\int_{0}^{t}\left(
e^{-\rho t}v^{\prime }(X_{t}^{\pi })\tilde{\sigma}(X_{t}^{\pi },\pi
_{t})\right) ^{2}dt\geq n\}$, for $n\geq 1$. Then, It\^{o}'s
formula yields
\begin{equation*}
e^{-\rho (T\wedge \tau _{n}^{\pi })}v(X_{T\wedge \tau _{n}^{\pi }}^{\pi
})=v(x)+\int_{0}^{T\wedge \tau _{n}^{\pi }}e^{-\rho t}\Big( -\rho
v(X_{t}^{\pi })+\frac{1}{2}v^{\prime \prime }(X_{t}^{\pi })\tilde{\sigma}%
^{2}(X_{t}^{\pi },\pi _{t})
\end{equation*}%
\begin{equation*}
+v^{\prime }(X_{t}^{\pi })\tilde{b}(X_{t}^{\pi },\pi _{t})\Big) \ dt+\int_{0}^{T\wedge \tau _{n}^{\pi }}e^{-\rho t}v^{\prime
}(X_{t}^{\pi })\tilde{\sigma}(X_{t}^{\pi },\pi _{t})\ dW_{t}.
\end{equation*}%
Taking expectations, using that $v$ solves the HJB equation (\ref{HJB}) and that $\pi$ is in general suboptimal yield
\begin{equation*}
\mathbb{E}\left[ e^{-\rho (T\wedge \tau _{n}^{\pi })}v(X_{T\wedge \tau
_{n}^{\pi }}^{\pi })\right]
\end{equation*}%
\begin{equation*}
=v(x)+\mathbb{E}\left[ \int_{0}^{T\wedge \tau _{n}^{\pi }}e^{-\rho t}\left(
-\rho v(X_{t}^{\pi })+\frac{1}{2}v^{\prime \prime }(X_{t}^{\pi })\tilde{%
\sigma}^{2}(X_{t}^{\pi },\pi _{t})+v^{\prime }(X_{t}^{\pi })\tilde{b}%
(X_{t}^{\pi },\pi _{t})\right)\, dt \right]
\end{equation*}%
\begin{equation*}
\leq v(x)-\mathbb{E}\left[ \int_{0}^{T\wedge \tau _{n}^{\pi }}e^{-\rho
t}\left( \tilde{r}(X_{t}^{\pi },\pi _{t})-\lambda \int_{\mathbb{R}}\pi
_{t}(u)\ln \pi _{t}(u)du\right) dt\right].
\end{equation*}%

Classical results yield $\mathbb{E}\left[ \sup_{0\leq t\leq T}|X_{t}^{\pi }|^{2}\right] \leq
K(1+x^{2})e^{KT}$, for some constant $K>0$ independent of $n$ (but dependent on $T$ and the model coefficients). Sending $n\rightarrow \infty $, we deduce that
\begin{equation*}
\mathbb{E}\left[ e^{-\rho T}v(X_{T}^{\pi })\right]  \leq v(x)-\mathbb{E}\left[
\int_{0}^{T}e^{-\rho t}\left( \tilde{r}(X_{t}^{\pi },\pi _{t})-\lambda \int_{%
\mathbb{R}}\pi _{t}(u)\ln \pi _{t}(u)du\right) dt\right] ,
\end{equation*}%
where we have used the dominated convergence theorem and that  $\pi \in
\mathcal{A}(x)$.

Next, we recall the admissible condition $\liminf_{T\rightarrow \infty }e^{-\rho T}\mathbb{E}\left[
(X_{T}^{\pi })^{2}\right] =0.$ This, together with the fact that $k_{2}<0$, lead to $\limsup_{T\rightarrow \infty }\mathbb{E}\left[ e^{-\rho T}v(X_{T}^{\pi })\right] = 0$.
Applying the dominated convergence theorem once more yields
\begin{equation*}
v(x)\geq \mathbb{E}\left[ \int_{0}^{\infty }e^{-\rho t}\left( \tilde{r}%
(X_{t}^{\pi },\pi _{t})-\lambda \int_{\mathbb{R}}\pi _{t}(u)\ln \pi
_{t}(u)du\right) dt \right] ,
\end{equation*}%
for each $x\in \mathbb{R}$ and $\pi \in \mathcal{A}(x)$. Hence, $%
v(x)\geq V(x)$, for all $x\in \mathbb{R}$. 

On the other hand, we deduce that
the right hand side of (\ref{HJB}) is maximized at
\begin{equation*}
\boldsymbol{\pi }^{\ast }(u;x)=\mathcal{N}\left( u\, \Big | \  \
\frac{CDxv^{\prime \prime }(x)+Bv^{\prime }(x)-Rx-Q}{N-D^{2}v^{\prime \prime
}(x)}\ ,\  \frac{\lambda }{N-D^{2}v^{\prime \prime }(x)}\right).
\end{equation*}%
Let $\pi ^{\ast }=\{\pi ^{\ast }_t,t\geq0\}$ be the open-loop control distribution
generated from the above feedback law along with the corresponding state process
$\{X_{t}^{\ast },t\geq0\}$  with $X_{0}^{\ast }=x$, and assume for now that $\pi ^{\ast }\in \mathcal{A}(x)$.
Then
\begin{equation*}
\mathbb{E}\left[ e^{-\rho T}v(X_{T}^{\ast })\right] =v(x)-\mathbb{E}\left[
\int_{0}^{T}e^{-\rho t}\left( \tilde{r}(X_{t}^{\ast },\pi _{t}^{\ast
})-\lambda \int_{\mathbb{R}}\pi _{t}^{\ast }(u)\ln \pi _{t}^{\ast
}(u)du\right) dt\right] .
\end{equation*}%
Noting that
$\liminf_{T\rightarrow \infty }\mathbb{E}\left[
e^{-\rho T}v(X_{T}^{\ast })\right] \leq \limsup_{T\rightarrow \infty }\mathbb{E}\left[
e^{-\rho T}v(X_{T}^{\ast })\right]=0$,
and applying the dominated
convergence theorem yield
\begin{equation*}
v(x)\leq \mathbb{E}\left[ \int_{0}^{\infty }e^{-\rho t}\left( \tilde{r}%
(X_{t}^{\ast },\pi _{t}^{\ast })-\lambda \int_{\mathbb{R}}\pi _{t}^{\ast
}(u)\ln \pi _{t}^{\ast }(u)du\right) dt\right] ,
\end{equation*}%
for any $x\in \mathbb{R}$. This proves that $v$ is indeed the value function, namely $v\equiv V$.

It remains to show that $\pi ^{\ast }\in \mathcal{A}(x)$. First, we verify
that
\begin{equation}
\liminf_{T\rightarrow \infty }e^{-\rho T}\mathbb{E}\left[ (X_{T}^{\ast })^{2}%
\right] =0,  \label{desired}
\end{equation}%
where $\{X^{\ast }_t,t\geq 0\}$ solves the SDE (\ref{second_state_dynamics}). To this end, It\^{o}'s formula yields, for any $T\geq 0,$
$$(X_{T}^{\ast })^{2}=x^{2}+\int_{0}^{T}\left( 2\left(\tilde{A}X_{t}^{\ast }+\tilde{B}%
\right) X_{t}^{\ast }+(\tilde{C_1}X_{t}^{\ast }+\tilde{C_2})^2+D^2\right) \,dt$$
\begin{equation}
+\int_{0}^{T}2X_{t}^{\ast }%
\sqrt{\left(\tilde{C_1}X_{t}^{\ast }+\tilde{C_2}\right)^{2}+\tilde{D}^2}\ dW_{t}.
\label{Ito_X*_squared}
\end{equation}%
Following similar arguments as in the proof of Lemma \ref{Lemma_appendix} in Appendix C, we can show that $\mathbb{E}[(X^*_T)^2]$ contains the terms $e^{(2\tilde{A}+\tilde{C_1}^2)T}$ and $e^{\tilde{A}T}$.

If $2\tilde{A}+\tilde{C_1}^2\leq \tilde{A}$, then  $\tilde{A}\leq 0$, in which case (\ref{desired}) easily follows. Therefore, to show (\ref{desired}), it remains to consider the case in which the term  $e^{(2\tilde{A}+\tilde{C_1}^2)T}$ dominates $e^{\tilde{A}T}$, as $T\rightarrow\infty$. In turn, using that $k_2$ solves the equation (\ref{anzats_a_2}), we obtain
$$2\tilde{A}+\tilde{C_1}^2-\rho=2A+\frac{2B(k_2(B+CD)-R)}{N-k_2D^2}+\left(C+\frac{D(k_2(B+CD)-R)}{N-k_2D^2}\right)^2-\rho$$
$$=2A+C^2-\rho+\frac{2(B+CD)(k_2(B+CD)-R)}{N-k_2D^2}+\frac{D^2(k_2(B+CD)-R)^2}{(N-k_2D^2)^2}$$
\begin{equation}\label{decay_condition}
=2A+C^2-\rho+\frac{k_2(2N-k_2D^2)(B+CD)^2}{N-k_2D^2}-\frac{2NR(B+CD)-D^2R^2}{N-k_2D^2}.
\end{equation}
Notice that the first fraction is nonpositive due to $k_2<0$, while the second fraction is bounded for any $k_2<0$. Using Assumption \ref{Assumption} on the range of $\rho$, we then easily deduce (\ref{desired}).

Next, we establish the admissibility constraint
$$%
\mathbb{E}\left[ \int_{0}^{\infty }e^{-\rho t}\left \vert L(X_{t}^{\ast
},\pi _{t}^{\ast })\right \vert dt\right] <\infty. $$
The definition of
$L$ and the form of $r(x,u)$ yield
\begin{equation*}
\mathbb{E}\left[ \int_{0}^{\infty }e^{-\rho t}\left \vert L(X_{t}^{\ast
},\pi _{t}^{\ast })\right \vert \, dt\right]
\end{equation*}%
\begin{equation*}
=\mathbb{E}\left[ \int_{0}^{\infty }e^{-\rho t}\left \vert \int_{\mathbb{R}%
}r(X_{t}^{\ast },u)\pi _{t}^{\ast }(u)du-\lambda \int_{\mathbb{R}}\pi^{\ast}
_{t}(u)\ln \pi^{\ast} _{t}(u)du\right \vert \,dt\right]
\end{equation*}%
\begin{equation*}
=\mathbb{E}\Big[ \int_{0}^{\infty }e^{-\rho t}\Big| \int_{\mathbb{R}}-\left(%
\frac{M}{2} \left( X_{t}^{\ast }\right) ^{2}+RX_{t}^{\ast }u+\frac{N}{2}u^2+PX_{t}^{\ast }+Qu\right) \pi
_{t}^{\ast }(u)du
\end{equation*}
\begin{equation*}
+\frac{\lambda }{2}\ln \left( \frac{2\pi e\lambda }{N-k _{2}D^{2}}\right) \Big| \, dt\Big],
\end{equation*}%
where we have applied similar computations as in the proof of Theorem \ref{exploration_cost_theorem}. Recall that
\[
\pi^{\ast }_t(u)=\mathcal{N}\left( u\  \left \vert \frac{(k_{2}(B+CD)-R)X^{\ast }_t+k_1B-Q}{N-k_{2}D^{2}}\ ,\  \frac{\lambda }{N-k_{2}D^{2}}\right. \right) ,\;\;t\geq 0.
\]
It is then clear that it suffices to prove $\mathbb{E}\left[ \int_{0}^{\infty }e^{-\rho
t}(X_{t}^{\ast })^{2}dt\right] <\infty ,$ which follows easily since, as
shown in (\ref{decay_condition}), $\rho >$ $2\tilde{A}+\tilde{C_1}^2$ under Assumption \ref{Assumption}. The remaining admissibility
conditions for $\pi ^{\ast }$ can be easily verified.

\section*{Appendix C: Proof of Theorem 7}
We first note that when (a) holds, the function $v$ solves the HJB equation (\ref{HJB_second_case}) of the exploratory LQ problem. Similarly for the classical LQ problem when (b) holds.

Next, we prove the equivalence between (a) and (b).
First, a comparison between the two
HJB equations (\ref{HJB_second_case}) and  (\ref{classical_HJB})
yields that if $v$ in (a) solves the former, then $w$ in (b) solves the
latter, and vice versa.

Throughout this proof, we let $x$ be fixed, being the initial state of both the 
exploratory problem in statement (a) and the classical problem in statement (b). 
Let $\pi^*=\{\pi^*_t,t\geq0\}$ and $u^*=\{u^*_t,t\geq0\}$ be respectively the open-loop controls generated by the feedback controls $\boldsymbol{\pi}^{\ast }$ and $\boldsymbol{u}^{\ast }$ of the two problems, and $X^*=\{X^*_t,t\geq0\}$ and $x^*=\{x^*_t,t\geq0\}$ be respectively the corresponding state processes, both starting from $x$. 
It remains to show the equivalence between the admissibility of $\pi^*$ for the exploratory problem and that of $u^*$ for the classical problem.
%
%
To this end, we first compute ${\mathbb{E}}[(X_{T}^{\ast })^{2}]$ and ${\mathbb{E}}[(x_{T}^{\ast })^{2}]$.

To ease the presentation, we rewrite the exploratory dynamics of $X^{\ast }$
 under $\pi ^{\ast }$ as
\begin{equation*}
dX_{t}^{\ast }=\left( AX_{t}^{\ast }+B\frac{(\alpha _{2}(B+CD)-R)X_{t}^{\ast
}+\alpha _{1}B-Q}{N-\alpha _{2}D^{2}}\right) \ dt
\end{equation*}%
\begin{equation*}
+\sqrt{\left( CX_{t}^{\ast }+D\frac{(\alpha _{2}(B+CD)-R)X_{t}^{\ast }+\alpha
_{1}B-Q}{N-\alpha _{2}D^{2}}\right) ^{2}+\frac{\lambda D^{2}}{N-\alpha
_{2}D^{2}}}\ dW_{t}
\end{equation*}%
\begin{equation*}
=(A_{1}X_{t}^{\ast }+A_{2})\ dt+\sqrt{\left( B_{1}X_{t}^{\ast
}+B_{2}\right) ^{2}+C_{1}}\ dW_{t},
\end{equation*}%
where $A_{1}:=A+\frac{B(\alpha _{2}(B+CD)-R)}{N-\alpha _{2}D^{2}}$, $A_{2}:=%
\frac{B(\alpha _{1}B-Q)}{N-\alpha _{2}D^{2}}$, $B_{1}:=C+\frac{D(\alpha
_{2}(B+CD)-R)}{N-\alpha _{2}D^{2}}$,
\begin{flushleft}
$B_{2}:=\frac{D(\alpha _{1}B-Q)}{%
N-\alpha _{2}D^{2}}$ and $C_{1}:=\frac{\lambda D^{2}}{N-\alpha _{2}D^{2}}$.
\end{flushleft}

Similarly, the classical dynamics of $x^{\ast }$
 under  $u^{\ast }$  solves
\begin{equation*}
dx_{t}^{\ast }=(A_{1}x_{t}^{\ast }+A_{2})\ dt+( B_{1}x_{t}^{\ast
}+B_{2}) \ dW_{t}.
\end{equation*}%

The desired equivalence of the admissibility then follows from the following lemma.

\begin{lemma}\label{Lemma_appendix}
We have that (i) $\liminf_{T\rightarrow \infty}e^{-\rho T}\mathbb{E}\big[\big(X_T^{*}\big)^2\big]=0$ if and only if
$\liminf_{T\rightarrow \infty}e^{-\rho T}{\mathbb{E}}\big[\big({x}_T^{*}\big)^2\big]=0$; (ii) $\mathbb{E}\left[\int_0^\infty e^{-\rho t}\big(X_t^{*}\big)^2dt\right]<\infty$ if and only if ${\mathbb{E}}\left[\int_0^\infty e^{-\rho t}\big({x}_t^{*}\big)^2dt\right]<\infty$.
\end{lemma}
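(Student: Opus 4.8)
The plan is to reduce both equivalences to an explicit, deterministic comparison of the second moments of the two state processes, exploiting that $X^{*}$ and $x^{*}$ share the same drift coefficient $A_1 y + A_2$ and the same initial value $x$. First I would observe that taking expectations in the two SDEs kills the (local-)martingale terms and shows that the means satisfy the identical linear ODE $\frac{d}{dt}\mathbb{E}[X_t^{*}] = A_1\mathbb{E}[X_t^{*}] + A_2$ (and likewise for $x_t^{*}$) with the same initial datum; hence $\mu(t) := \mathbb{E}[X_t^{*}] = \mathbb{E}[x_t^{*}]$ for all $t \geq 0$. Next, writing $m(t) := \mathbb{E}[(X_t^{*})^2]$ and $\tilde m(t) := \mathbb{E}[(x_t^{*})^2]$, I would apply It\^o's formula to $y \mapsto y^2$, localize by stopping times as in Appendix B, take expectations, and pass to the limit using the a priori bound $\mathbb{E}[\sup_{0\le t\le T}(X_t^{*})^2] \le K(1+x^2)e^{KT}$ to obtain the linear ODE
\begin{equation*}
m'(t) = (2A_1 + B_1^2)\,m(t) + 2(A_2 + B_1 B_2)\mu(t) + B_2^2 + C_1, \qquad m(0) = x^2,
\end{equation*}
together with the same equation for $\tilde m$ but without the term $C_1$ (which is exactly the extra diffusion $C_1 = \frac{\lambda D^2}{N-\alpha_2 D^2} \ge 0$ carried by the exploratory dynamics).

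The crucial point is that the two second-moment ODEs are identical except for the additive constant $C_1$, and they share the same initial value; therefore the difference $\delta(t) := m(t) - \tilde m(t) \ge 0$ solves the \emph{autonomous} linear ODE $\delta'(t) = (2A_1 + B_1^2)\delta(t) + C_1$ with $\delta(0)=0$. This integrates explicitly to $\delta(t) = \frac{C_1}{2A_1+B_1^2}\big(e^{(2A_1+B_1^2)t}-1\big)$ when $2A_1+B_1^2 \neq 0$, and to $\delta(t) = C_1 t$ in the degenerate case $2A_1+B_1^2 = 0$. Running the computation (\ref{decay_condition}) verbatim with $\alpha_2$ in place of $k_2$ (legitimate since $\alpha_2<0$ solves the same quadratic (\ref{anzats_a_2})), Assumption \ref{Assumption} gives $\rho > 2A_1 + B_1^2$. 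From the explicit form of $\delta$ this yields both $e^{-\rho T}\delta(T) \to 0$ as $T\to\infty$ and $\int_0^\infty e^{-\rho t}\delta(t)\,dt < \infty$, where $\rho>0$ absorbs the degenerate and the $2A_1+B_1^2\le 0$ cases.

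With these two facts the conclusions are immediate. For (i), the identity $e^{-\rho T}m(T) = e^{-\rho T}\tilde m(T) + e^{-\rho T}\delta(T)$ together with $e^{-\rho T}\delta(T)\to 0$ shows $\liminf_{T}e^{-\rho T}m(T) = \liminf_{T}e^{-\rho T}\tilde m(T)$, so one $\liminf$ vanishes iff the other does. For (ii), Tonelli's theorem lets me interchange expectation and the (nonnegative) time integral, so $\int_0^\infty e^{-\rho t}m(t)\,dt = \int_0^\infty e^{-\rho t}\tilde m(t)\,dt + \int_0^\infty e^{-\rho t}\delta(t)\,dt$, and since the last integral is finite, the two remaining integrals are simultaneously finite or infinite.

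I expect the main obstacle to be the rigorous derivation of the second-moment ODEs rather than the comparison itself: the stochastic integral is only a local martingale, so the localization-and-limit argument (and the uniform integrability furnished by the linear-growth bound) must be carried out carefully. The sign condition $\rho > 2A_1+B_1^2$ supplied by Assumption \ref{Assumption} is the decisive analytic input that renders the discounted difference $e^{-\rho t}\delta(t)$ negligible, and I would flag that without it the equivalence can fail.
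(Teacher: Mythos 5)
Your argument is correct and arrives at exactly the same two moment ODEs as the paper --- the common mean equation and the pair of second-moment equations that differ only by the additive constant $C_1=\frac{\lambda D^2}{N-\alpha_2D^2}$ --- but the way you exploit them is genuinely different. The paper solves \emph{both} second-moment ODEs in closed form across five parameter cases ($A_1=B_1^2=0$; $A_1=0$, $B_1\neq0$; $A_1+B_1^2=0$; $2A_1+B_1^2=0$; the generic case) and concludes by inspecting the resulting explicit expressions, without ever invoking Assumption \ref{Assumption}. You instead subtract the two equations, obtain the autonomous problem $\delta'(t)=(2A_1+B_1^2)\delta(t)+C_1$, $\delta(0)=0$, with $\delta\geq0$, integrate it explicitly, and dispose of the discounted difference via the inequality $\rho>2A_1+B_1^2$, which you import from Assumption \ref{Assumption} through the computation (\ref{decay_condition}) (the transfer from $k_2$ to $\alpha_2$ is legitimate, since statement (a) forces $\alpha_2<0$ to solve (\ref{anzats_a_2})). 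This buys a shorter, essentially case-free argument that isolates the one analytic fact doing the work in the ``hard'' directions of both equivalences, the easy directions being free from $0\leq\hat m(t)\leq m(t)$. The only point to flag is that Lemma \ref{Lemma_appendix} and Theorem \ref{Theorem_equivalence} are stated, and proved in the paper, without Assumption \ref{Assumption} among their hypotheses, so as written your proof establishes the lemma under an additional standing assumption. To match the stated generality you would need to treat $\rho\leq 2A_1+B_1^2$ as well --- for instance by falling back on the explicit case-by-case solutions as the paper does --- though your closing remark that the equivalence can fail without some such condition is a fair observation about where the real content of the comparison lies.
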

\begin{proof}
%
Denote $n(t):=\mathbb{E}\left[X_{t}^{*}\right]$, for $t\geq 0$. Then,
a standard argument involving a series of stopping times and the dominated convergence theorem yields
the ODE
$$\frac{dn(t)}{dt}=A_1n(t)+A_2,\quad n(0)=x,$$
whose solution is $n(t)=\left(x+\frac{A_2}{A_1}\right)e^{A_1t}-\frac{A_2}{A_1}$, if $A_1\neq 0$, and $n(t)=x+A_2t$, if $A_1=0$. Similarly, the function $m(t):=\mathbb{E}\big[(X_t^{*})^2\big]$, $t\geq 0$, solves the ODE
$$\frac{dm(t)}{dt}=(2A_1+B_1^2)m(t)+2(A_2+B_1B_2)n(t)+B_2^2+C_1,\quad m(0)=x^2.$$

We can also show that $n(t)={\mathbb{E}}\big[{x}_t^{*}\big]$, and deduce that $\hat{m}(t):={\mathbb{E}}\big[({x}_t^{*})^2\big]$, $t\geq 0$, satisfies
$$\frac{d\hat{m}(t)}{dt}=(2A_1+B_1^2)\hat{m}(t)+2(A_2+B_1B_2)n(t)+B_2^2,\quad \hat{m}(0)=x^2.$$
Next, we find explicit solutions to the above ODEs corresponding to various conditions on the parameters.

(a) If $A_1=B_1^2=0$, then direct computation gives $n(t)=x+A_2t$, and
$$m(t)=x^2+A_2(x+A_2t)t+(B_2^2+C_1)t,$$
$$\hat{m}(t)=x^2+A_2(x+A_2t)t+B_2^2t.$$

(b) If $A_1=0$ and $B_1^2\neq 0$, we have $n(t)=x+A_2t$, and
$$m(t)=\left(x^2+\frac{2(A_2+B_1B_2)\left(A_2+B_1^2(x+B_2^2+C_1)\right)}{B_1^4}\right)e^{B_1^2t}$$
$$-\frac{2(A_2+B_1B_2)\left(A_2+B_1^2(x+B_2^2+C_1)\right)}{B_1^4},$$
$$\hat{m}(t)=\left(x^2+\frac{2(A_2+B_1B_2)\left(A_2+B_1^2(x+B_2^2)\right)}{B_1^4}\right)e^{B_1^2t}$$
$$-\frac{2(A_2+B_1B_2)\left(A_2+B_1^2(x+B_2^2)\right)}{B_1^4}.$$

(c) If $A_1\neq 0$ and $A_1+B_1^2=0$, then $n(t)=\left(x+\frac{A_2}{A_1}\right)e^{A_1t}-\frac{A_2}{A_1}$. Further calculations yield
$$m(t)=\left(x^2+\frac{A_1(B_2^2+C_1)-2A_2(A_2+B_1B_2)}{A_1^2}\right)e^{A_1t}$$
$$+\frac{2(A_2+B_1B_2)(A_1x+A_2)}{A_1}te^{A_1t}-\frac{A_1(B_2^2+C_1)-2A_2(A_2+B_1B_2)}{A_1^2},$$
$$\hat{m}(t)=\left(x^2+\frac{A_1B_2^2-2A_2(A_2+B_1B_2)}{A_1^2}\right)e^{A_1t}$$
$$+\frac{2(A_2+B_1B_2)(A_1x+A_2)}{A_1}te^{A_1t}-\frac{A_1B_2^2-2A_2(A_2+B_1B_2)}{A_1^2}.$$

(d) If $A_1\neq 0$ and $2A_1+B_1^2=0$, we have $n(t)=\left(x+\frac{A_2}{A_1}\right)e^{A_1t}-\frac{A_2}{A_1}$, and
$$m(t)=\frac{2(A_2+B_1B_2)(A_1x+A_2)}{A_1^2}e^{A_1t}$$
$$+\frac{A_1(B_2^2+C_1)-2A_2(A_2+B_1B_2)}{A_1^2}t+x^2-\frac{2(A_2+B_1B_2)(A_1x+A_2)}{A_1^2},$$
$$\hat{m}(t)=\frac{2(A_2+B_1B_2)(A_1x+A_2)}{A_1^2}e^{A_1t}$$
$$+\frac{A_1B_2^2-2A_2(A_2+B_1B_2)}{A_1^2}t+x^2-\frac{2(A_2+B_1B_2)(A_1x+A_2)}{A_1^2}.$$

(e) If $A_1\neq 0$, $A_1+B_1^2\neq 0$ and $2A_1+B_1^2\neq 0$, then we arrive at $n(t)=\left(x+\frac{A_2}{A_1}\right)e^{A_1t}-\frac{A_2}{A_1}$, and
$$m(t)=$$
$$\left(x^2+\frac{2(A_2+B_1B_2)(A_1x+A_2)}{A_1(A_1+B_1^2)}+\frac{A_1(B_2^2+C_1)-2A_2(A_2+B_1B_2)}{A_1(2A_1+B_1^2)}\right)e^{(2A_1+B_1^2)t}$$
$$-\frac{2(A_2+B_1B_2)(A_1x+A_2)}{A_1(A_1+B_1^2)}e^{A_1t}-\frac{A_1(B_2^2+C_1)-2A_2(A_2+B_1B_2)}{A_1(2A_1+B_1^2)},$$
$$\hat{m}(t)=\left(x^2+\frac{2(A_2+B_1B_2)(A_1x+A_2)}{A_1(A_1+B_1^2)}+\frac{A_1B_2^2-2A_2(A_2+B_1B_2)}{A_1(2A_1+B_1^2)}\right)e^{(2A_1+B_1^2)t}$$
$$-\frac{2(A_2+B_1B_2)(A_1x+A_2)}{A_1(A_1+B_1^2)}e^{A_1t}-\frac{A_1B_2^2-2A_2(A_2+B_1B_2)}{A_1(2A_1+B_1^2)}.$$
It is easy to see that for all cases (a)--(e), the assertions in the Lemma follow and we conclude.
\end{proof}
\\

\end{appendix}

\bibliography{mybibtex}

\end{document}